\newtheorem{theo}{Theorem}[section]
\newtheorem{lemma}[theo]{Lemma}
\newtheorem{nota}[theo]{Notation}
\newtheorem{propo}[theo]{Proposition}
\newtheorem{defi}[theo]{Definition}
\newtheorem{coro}[theo]{Corollary}
\newtheorem{rem}[theo]{Remark}
\newtheorem{exam}[theo]{Example}
\newtheorem{exams}[theo]{Examples}
\newcommand\Po{\operatorname{Po}}
\newcommand\Tc{\operatorname{Tc}}
\newcommand\op{\operatorname{op}}
\newcommand\id{\operatorname{id}}
\newcommand\Set{\operatorname{\bf Set}}
\newcommand\Heyt{\operatorname{\bf Heyt}}
\newcommand\Bool{\operatorname{\bf Bool}}
\newcommand\Ab{\operatorname{\bf Ab}}
\newcommand\CAlg{\operatorname{\bf CAlg}}
\newcommand\Ban{\operatorname{\bf Ban}}
\newcommand\Pos{\operatorname{\bf Pos}}
\newcommand\Gr{\operatorname{\bf Gr}}
\newcommand\cof{\operatorname{cof}}
\newcommand\cell{\operatorname{cell}}
\newcommand\colim{\operatorname{colim}}
\newcommand\ca{\mathcal {A}}
\newcommand\cd{\mathcal {D}}
\newcommand\ce{\mathcal {E}}
\newcommand\ck{\mathcal {K}}
\newcommand\cm{\mathcal {M}}
\newcommand\cs{\mathcal {S}}
\newcommand\cp{\mathcal {P}}
\newcommand\cx{\mathcal {X}}
\newcommand\cn{\mathcal {N}}
\date{March 7, 2018}
\begin{document}
\title[On the uniqueness of cellular injectives]
{On the uniqueness of cellular injectives}
\author[J. Rosick\'{y}]
{J. Rosick\'{y}}
\thanks{Supported by the Grant Agency of the Czech Republic under the grant 
               P201/12/G028.} 
\address{
\newline J. Rosick\'{y}\newline
Department of Mathematics and Statistics\newline
Masaryk University, Faculty of Sciences\newline
Kotl\'{a}\v{r}sk\'{a} 2, 611 37 Brno, Czech Republic\newline
rosicky@math.muni.cz
}
 
\begin{abstract}
A. Avil\' es and C. Brech proved an intriguing result about the existence and uniqueness of certain injective Boolean algebras or Banach spaces. Their result refines the standard existence and uniqueness of saturated models. They express a wish to obtain a unified approach in the context of category theory. We provide this in the framework of weak factorization systems. Our basic tool is the fat small object argument.
\end{abstract} 
\keywords{cellular object, injective object, weak factorization system, Boolean algebra, Banach space}
\subjclass{18C35, 06E05, 46B26}

\maketitle

\section{Introduction}
The starting point of \cite{AB} is Parovi\v cenko's theorem \cite{P} saying that, under CH, $\cp(\omega)/fin$ is a unique Boolean algebra of size continuum which is injective to embeddings between countable Boolean algebras in the category $\Bool_0$ of Boolean algebras and embeddings as morphisms. The main result of \cite{AB} says that if the continuum $c$ is a regular cardinal then there 
is a unique Boolean algebra $B$ of size $c$ which is tightly $\sigma$-filtered and injective in $\Bool_0$ to embeddings $f:X\to Y$ where $|X|< c$ and $f$ is a pushout of an embedding between countable Boolean algebras along an embedding. The proof is based 
on the result of Geschke \cite{G} that a Boolean algebra is tightly $\sigma$-filtered if and only if it has an additive 
$\sigma$-skeleton. These concepts have a natural interpretation in the context of weak factorization systems (see \cite{B}). 
If $\cs$ denotes the set of embeddings between countable Boolean algebras then tightly $\sigma$-filtered is the same 
as $\cs$-cellular in the sense of \cite{MRV}. An additive $\sigma$-skeleton is replaced by the fat small object argument \cite{MRV} which makes possible to represent an $\cs$-cellular object by means of small $\cs$-cellular subobjects. This approach works in any locally presentable category equipped with a suitable class $\cm$ of monomorphisms and a suitable subset $\cs\subseteq\cm$. In particular, it covers Banach spaces, which is the second case treated in \cite{AB}. Here $\cm$ is the class of isometries and $\cs$ consists of isometries between separable Banach spaces. The analogue of Parovi\v cenko's theorem for Banach spaces was established by Kubi\'s \cite{K}. 

Locally presentable categories (see \cite{MP}, \cite{AR}) form a very broad class of categories incorporating varieties of universal algebras, categories of partially ordered sets, Banach spaces (with linear contractions), $C^\ast$-algebras and many others.
Any object of a locally presentable category is equipped with an internal size corresponding to cardinalities in the category of Boolean algebras and density characters in that of Banach spaces. In our examples, $\cm$ will be the class of regular monomorphisms which should be stable under pushouts. This means that our locally presentable category should be coregular. Weak factorization systems originated 
in homotopy theory (see \cite{B}) and provide a natural context for the notions of cellularity and cofibrancy. Usually, $\cs$ consists of regular monomorphisms between $\lambda$-presentable objects. For a regular cardinal $\kappa$, we introduce 
$(\cm,\cs,\kappa)$-injective objects generalizing injective Boolean algebras or Banach spaces of \cite{AB}. If $\cm$ is cofibrantly generated by $\cs$ then these objects are the usual $\cm$-saturated objects which are known to be unique (up to isomorphism).
Both in Boolean algebras and in Banach spaces, $\cm$ is not cofibrantly generated by $\cs$ and the main result of \cite{AB} is that 
$\cs$-cellular $(\cm,\cs,\kappa)$-injectives are unique. Our main results generalize this. While \cite{AB} uses pushouts 
of $\cs$-morphisms along $\cm$-morphisms, we use pushouts along arbitrary morphisms. But, due to the (epimorphism, regular monomorphism) factorizations, this leads to the same concepts.

In the first two sections we recall locally presentable categories and weak factorization systems. The last lemma of the first
section shows that the basic property of $\cs$ is valid in any coregular category. The second section ends with establishing
some features of the fat small object argument which are not included in \cite{MRV}. Our main results are in the third section
and examples are treated in the forth section.
\vskip 1mm

\noindent
{\bf Acknowledgement.} We are grateful to the referee for suggestions which improved our presentation.

\section{Locally presentable categories}
Our basic framework will be  a locally presentable category equipped with a factorization system $(\ce,\cm)$.
Recall that a category $\ck$ is locally $\lambda$-presentable ($\lambda$ is a regular cardinal) if it is cocomplete
and has a set $\ca$ of $\lambda$-presentable objects such that every object is a $\lambda$-directed colimit of objects from $\ca$.
An object $A$ is $\lambda$-presentable if the hom-functor $\ck(A,-):\ck\to\Set$ preserves $\lambda$-directed colimits. A category
$\ck$ is locally presentable if it is locally $\lambda$-presentable for some $\lambda$. Any locally $\lambda$-presentable category
has only a set of non-isomorphic $\lambda$-presentable objects. In what follows, $\ck_\lambda$ will denote a representative set of these.
Replacing cocompleteness in the definition of locally $\lambda$-presentable category by the existence of $\lambda$-directed colimits only,
we get the concept of a $\lambda$-accessible category (see \cite{MP} or \cite{AR}).

Let $f\colon A\to B$, $g\colon C\to D$ morphisms in $\ck$ such that in each commutative square
$$
\xymatrix@=3pc{
A \ar[r]^{u} \ar[d]_{f}& C \ar[d]^g\\
B\ar[r]_v & D
}
$$
there is a diagonal $d\colon B \to C$ with $df=u$ and $gd=v$. Then $g$ has the right lifting property
w.r.t. $f$ and $f$ has the left lifting property w.r.t. $g$. For a class $\cx$ of morphisms of $\ck$ we put
\begin{align*}
\cx^{\square}& = \{g\mid g \ \mbox{has the right lifting property
w.r.t.\ each $f\in \cx$\} and}\\
{}^\square\cx & = \{ f\mid f \ \mbox{has the left lifting property
w.r.t.\ each $g\in \cx$\}.}
\end{align*}

By a factorization system $(\ce,\cm)$ in $\ck$ we mean the classical concept of a proper factorization system (see \cite{FK}): $\ce$ is 
a class of epimorphisms, $\cm$ a class of monomorphisms, $\cm=\ce^\square$, $\ce={}^\square\cm$ and each morphism $f$ of $\ck$ has 
a factorization $f=me$ where $e\in\ce$ and $m\in\cm$. Then both $\ce$ and $\cm$ are closed under composition and $\ce\cap\cm$ is the class
of all isomorphisms of $\ck$. In any locally presentable category we have the factorization systems (epimorphism, strong monomorphism) and (strong epimorphism, monomorphism) (see \cite{AR} 1.61). 

\begin{rem}\label{cancel}
{
\em
Let $(\ce,\cm)$ is a factorization system in a category $\ck$.

(1) Following \cite{FK}, $\cm$ satisfies the cancellation property
\begin{enumerate}
\item [(C)] $gf\in\cm$ implies that $f\in\cm$.
\end{enumerate}

(2) Let $\ck_\cm$ be the category having the same objects as $\ck$ but morphisms are only those belonging to $\cm$. Since isomorphisms belong to $\cm$, $\ck_\cm$ is full w.r.t. isomorphisms in $\ck$; we say that $\ck_\cm$ is \textit{iso-full} in $\ck$. Every object 
of $\ck_\cm$ is a $\lambda$-directed colimit in $\ck_\cm$ of $\ce$-quotients of objects of $\ck_\lambda$ and these belong 
to $\ck_\lambda$ (see \cite{AR1}). 

Moreover, assume that the inclusion $\ck_\cm\to\ck$ reflects $\lambda$-directed colimits. This means that, having a $\lambda$-directed diagram $D:\cd\to\ck$ with $D(f)\in\cm$ for any $f\in\cd$ with a colimit cocone $\delta: D\to \colim D$ then $\delta_d\in\cm$ for any 
$d\in\cd$ and, moreover, having a cocone $\varphi:D\to K$ with $\varphi_d\in\cm$ for all $d\in\cd$, then the induced morphism 
$\colim D\to K$ belongs to $\cm$. In this case, $\ck_\cm$ is $\lambda$-accessible. Thus $\ck_\cm$ is an accessible iso-full subcategory of a locally presentable category which is closed under $\lambda$-filtered colimits and is coherent (following the cancellation property (C) from \ref{cancel}). Thus $\ck_\cm$ is a $\lambda$-abstract elementary class in the sense of \cite{BGLRV}. 
This always happens when $\cm$ is the class of regular monomorphisms (see \cite{AR}, 1.62).
}
\end{rem}

Let $\kappa$ be a regular cardinal. An object $K$ of a category $\ck$ has the \textit{presentability rank} $\kappa$ if it is 
$\kappa$-presentable but not $\mu$-presentable for any regular cardinal $\mu<\kappa$. If $\ck$ is locally $\lambda$-presentable
and $K$ is not $\lambda$-presentable then the presentability rank of $K$ is a successor cardinal $\mu^+$ (see \cite{BR} 4.2).
Then $\mu$ is called the \textit{size} of $K$.

\begin{nota}\label{not}
{
\em
Given a class $\cx$ of morphisms in $\ck$, $\Po(\cx)$ will denote the class of all pushouts of morphisms from $\cx$.
This means that, given a pushout
$$
\xymatrix@=3pc{
B \ar[r]^{g} & D \\
A \ar [u]^{u} \ar [r]_{f} &
C \ar[u]_{v}
}
$$
with $f$ in $\cx$ then $g$ belongs to $\Po(\cx)$. By taking only $f:A\to C$ in $\cx$ with $A$ and $C$ $\kappa$-presentable, we get 
the class $\Po_\kappa(\cx)$.

$\Tc(\cx)$ denotes the class of transfinite compositions of morphisms from $\cx$. This means that $f:K\to L$ is in $\Tc(\cx)$ if there is a smooth chain $(f_{ij}\colon K_i \to K_j)_{i< j\leq\lambda}$ (i.e., $\lambda$ is an ordinal, $(f_{ij}\colon K_i \to K_j)_{i<j}$ is a colimit for any limit ordinal $j\leq\lambda$) such that $f_{i,i+1}\in\cx$ for each $i< \lambda$ and $f=f_{0\lambda}$. In particular, any isomorphism is in $\Tc(\cx)$ (for $\lambda=0$), any morphism from $\cx$ is in $\Tc(\cx)$ (for $\lambda=1$) and $\Tc(\cx)$ is closed under composition. $\kappa$-$\Tc(\cx)$ denotes the class of transfinite compositions of length smaller than $\kappa$, i.e.,
$\lambda<\kappa$. The class $\cell(\cx)=\Tc(\Po(\cx))$ is the class of $\cx$-cellular morphisms.
}
\end{nota}

\begin{defi}\label{special}
{
\em
A factorization system $(\ce,\cm)$ in a locally presentable category $\ck$ will be called \textit{special} if $\cell(\cm)=\cm$.
}
\end{defi}

In a factorization system $(\ce,\cm)$, $\ce$ is determined by $\cm$ (and $\cm$ by $\ce$). Thus we will just say that $\cm$ is
\textit{special}. 

A category is called coregular if it is finitely cocomplete, has equalizers of cokernel pairs and regular monomorphisms are stable under pushouts. This means that the class $\cm$ of all regular monomorphisms satisfies $\Po(\cm)=\cm$. Any coregular category has 
the factorization system (epimorphism, regular monomorphism). Thus regular and strong monomorphisms coincide in every locally presentable coregular category. 

\begin{rem}\label{colim}
{
\em
Assume that $\ck$ is a locally presentable category equipped with a special factorization system $(\ce,\cm)$. A pushout
$$
\xymatrix@=3pc{
B \ar[r]^{g} & D \\
A \ar [u]^{u} \ar [r]_{f} &
C \ar[u]_{v}
}
$$
in $\ck$ with $f$ and $u$ in $\cm$ does not need to be a pushout in $\ck_\cm$ because the unique morphism $D\to D'$ to another
commutative square in $\ck_\cm$
$$
\xymatrix@=3pc{
B \ar[r]^{} & D' \\
A \ar [u]^{u} \ar [r]_{f} &
C \ar[u]_{}
}
$$
does not need to be in $\cm$.

But these pushouts provide (a strong form of) the amalgamation property of $\ck_\cm$.   
}
\end{rem}

Usually, $\ck$ will be coregular and $\cm$ will be the class of regular monomorphisms.  

\begin{exams}\label{coregular}
{
\em 
(1) In any locally finitely presentable coregular category, the class of regular monomorphisms is special. Moreover, it is closed
under directed colimits (see \cite{AR}, 1.62).

(2) The category $\Bool$ of Boolean algebras is locally finitely presentable and regular monomorphisms coincide with monomorphisms. The dual of $\Bool$ is the category of compact Hausdorff zero-dimensional spaces which is regular as an epi-reflective full subcategory of compact Hausdorff spaces. Thus $\Bool$ is coregular and regular monomorphisms form a special class. If a Boolean algebra $B$ is not
finite then its size is equal to the cardinality of the underlying set of $B$.

(2) The category $\Ban$ of Banach spaces and their linear operators of norm at most $1$ is coregular (see \cite{ASCGM} 2.1)
and locally $\aleph_1$-presentable (see \cite{AR} 1.48). Regular monomorphisms coincide with isometries and they are closed under directed colimits. Thus the class of regular monomorphisms is special. If a Banach space is not finitely dimensional then its size
is equal to its density character.

(3) Any Grothendieck topos is coregular and regular monomorphisms coincide with monomorphisms. Thus, in a locally presentable Grothendieck topos, regular monomorphisms form a special class. 

The same holds for any Grothendieck abelian category.
}
\end{exams}

\section{Weak factorization systems}
Let $\cm$ be a class of morphisms in a category $\ck$. Recall that an object $K$ of $\ck$ is $\cm$-injective if for any morphism 
$f:A\to B$ from $\cm$ and any morphism $g:A\to K$ there is a morphism $h:B\to K$ such that $hf=g$. One says that $\ck$ has enough 
$\cm$-injectives if for any object $K$ from $\ck$ there is a morphism $K\to L$ in $\cm$ such that $L$ is $\cm$-injective. This
property is closely related to weak factorization systems.

A weak factorization system $(\cm,\cn)$ in a category $\ck$ consists of two classes $\cm$ and $\cn$ of morphisms of $\ck$ such that
\begin{enumerate}
\item[(1)] $\cn = \cm^{\square}$, $\cm = {}^\square \cn$, and
\item[(2)] any morphism $h$ of $\ck$ has a factorization $h=gf$ with
$f\in \cm$ and $g\in \cn$.
\end{enumerate}

Having a weak factorization system $(\cm,\cn)$ in a category $\ck$ with a terminal object $1$ then $\ck$ has enough $\cm$-injectives. It suffices to take the weak factorization of a unique morphism $K\to L\to 1$. On the other hand, if $\cm$ satisfies (C) then 
$(\cm,\cm^\square)$ is a weak factorization system if and only if $\ck$ has enough $\cm$-injectives (see \cite{AHRT} 1.6). 
 
A weak factorization system $(\cm,\cn)$ is called cofibrantly generated if there is a set $\cx$ of morphisms such that 
$\cn=\cx^\square$. In this case, $\cm$-injectives coincide with $\cx$-injectives. If $\ck$ is locally presentable and $\cx$ a set 
of morphisms then $({}^\square(\cx^\square),\cx^\square)$ is a weak factorization system (see \cite{B}). The class 
${}^\square(\cx^\square)$ has a better description.

A morphism is $\cx$-cofibrant if it is a retract an $\cx$-cellular morphism in some comma category $K\backslash\ck$. We will use 
the notation $\cof(\cx)$ for the resulting class of morphisms. Then ${}^\square(\cx^\square)=\cof(\cx)$.

$\cx$ is called cofibrantly closed if $\cof(\cx)=\cx$. If $(\cm,\cn)$ is a weak factorization system then $\cm$ is cofibrantly closed. Any special class $\cm$ containing split monomorphisms is cofibrantly closed. Any split monomorphism is regular.  

\begin{exams}\label{inj}
{
\em
(1) Let $\cm$ be the class of regular monomorphisms in $\Bool$. Then $\cm$-injective Boolean algebras are precisely complete Boolean algebras and $\Bool$ has enough $\cm$-injectives (see \cite{H}). But $\cm$ is not cofibrantly generated because, for a set $\cx$ of regular monomorphisms,
$\cx$-injectives contain all $\kappa$-complete Boolean algebras where the domains and the codomains of morphisms of $\cx$ are
$\kappa$-presentable. 

(2) Let $\cm$ be the class of regular monomorphisms in $\Ban$. Then $\cm$-injective Banach spaces are precisely Banach spaces
$C(X)$ where $X$ is an extremally disconnected compact Hausdorff spaces and $\Ban$ has enough $\cm$-injectives (see \cite{C}).
But $\cm$ is not cofibrantly generated because, for a set $\cx$ of regular monomorphisms, $\cx$-injectives contain all $(1,\kappa)$-injective Banach spaces where the codomains of morphisms of $\cx$ are $\kappa$-presentable (see \cite{ASCGM1}).

(3) Any Grothendieck topos has enough injectives with respect to regular monomorphisms. The same is true for any Grothendieck abelian
category. For instance, these injectives in the category $\Ab$ of abelian groups are precisely divisible groups. In all these
cases, regular monomorphisms coincide with monomorphisms and the class $\cm$ of regular monomorphisms is always cofibrantly generated (see \cite{B}). For instance, in $\Ab$ by $n\Bbb Z\to\Bbb Z$, $n\in\Bbb N$.
}
\end{exams}

An object $K$ is $\cx$-cellular if $O\to K$ is $\cx$-cellular where $O$ is an initial object of $\ck$. This means that there is
a smooth chain $(f_{ij}\colon K_i \to K_j)_{i< j\leq\lambda}$ such that $f_{i,i+1}\in\Po(\cx)$ for each $i< \lambda$, $K_0=O$ 
and  $K_\lambda=K$. If $\ck$ is locally presentable and $\cx$ is a set of morphisms between $\kappa$-presentable objects this chain does not need to be $\kappa$-directed and objects $K_i$, $i<\lambda$ do not need to be $\kappa$-presentable. The fat small object argument (see \cite{MRV}) improves this.

Recall that a poset $P$ is well-founded if every of its nonempty subsets contains a minimal element. Given 
$x\in P$, $\downarrow x=\{y\in P\mid y\leq x\}$ denotes the initial segment generated by $x$. A poset $P$ is \textit{good} if it is well-founded and has a least element $\perp$. A good poset is called $\kappa$-\textit{good} if all its initial segments $\downarrow x$ have cardinality $<\kappa$. An element $x$ of a good poset $P$ is called \textit{isolated} if 
$$
\downdownarrows x=\{y\in P\mid y< x\}
$$ 
has a top element $x^-$ which is called the \textit{predecessor} of $x$. A non-isolated element distinct from $\perp$ is called \textit{limit}. Given $x<y$ in a poset $P$, we denote $xy$ the unique morphism $x\to y$ in the category $P$. A diagram 
$D\colon P\to\ck$ is smooth if, for every limit $x\in P$, the diagram $(D(yx)\colon Dy\to Dx)_{y<x}$ is a colimit cocone on the restriction of $D$ to $\downdownarrows x$. A good diagram $D\colon P\to\ck$ is a smooth diagram whose shape category $P$ is a good poset. The composite of $D$ is the component $\delta_\perp$ of a colimit cocone. The \textit{links} in $D$ are the morphisms $D(x^-x)$ for isolated elements $x$.
Following \cite{MRV} 4.11, transfinite compositions of morphisms from $\cx$ can be replaced by com\-po\-si\-tes of good diagrams with links in $\Po(\cx)$. 

If $\ck$ is locally $\kappa$-presentable and $\cx$ is a set of morphisms between $\kappa$-presentable objects then any $\cx$-cellular object $K$ of $\ck$ is a colimit of a $\kappa$-good $\kappa$-directed diagram $D:P\to\ck$ of $\kappa$-presentable objects with $D\perp=O$ and links in $\Po(\cx)$ (see \cite{MRV} 4.11 and 4.15).  

\begin{rem}\label{good}
{
\em
(1) For the reader's convenience, we recall the proof of \cite{MRV} 4.11. which transforms a smooth chain 
$(f_{ij}\colon K_i \to K_j)_{i< j\leq\lambda}$ to the $\kappa$-directed $\kappa$-good diagram with the same composition.
One proceeds by recursion and, up to $\kappa$, one keeps the chain unchanged because the objects $K_\alpha$, $\alpha<\kappa$ are
$\kappa$-presentable. The object $K_\kappa$ will be omitted and a pushout
$$
\xymatrix@=3pc{
K_\kappa \ar[r]^{f_{\kappa\kappa+1}} & K_{\kappa+1} \\
X \ar [u]^{u} \ar [r]_{h} &
Y \ar[u]_{}
}
$$
will be replaced as follows. Since $X$ is $\kappa$-presentable, $u$ factorizes through some $f_{\alpha\kappa}:K_\alpha\to K_\kappa$
as $u=f_{\alpha\kappa}u_\alpha$. The pushout above is replaced by pushouts of $h$ along $u_\beta=f_{\alpha\beta}u_\alpha$
$\alpha\leq\beta<\kappa$. Since pushouts commute with colimits, $K_{\kappa+1}$ is the colimit of top-right corners of these pushouts. 
In the same way, one proceeds in all isolated steps. In limit steps, one takes the union of the preceding diagrams. This union is
$\kappa$-good but not $\kappa$-directed. One enhances it by adding colimits $p_S$ of all initial segments $S$ of cardinality
$<\kappa$ and making these added elements incomparable among themselves. By iterating this construction $\kappa$ times one gets
the desired $\kappa$-good $\kappa$-directed diagram. This is the $\ast$-construction described in \cite{MRV} 4.10.

(2) By inspecting the just recalled proof, one sees that the constructed $\kappa$-good $\kappa$-directed diagram has the property that for any $Q\subseteq P$ of cardinality $<\kappa$ one has $\colim_Q D=Dx$ for some $x\in P$. Here, $\colim_Q D$ denotes the colimit of the restriction of $D$ on $Q$. One proceeds by recursion and, at the step $\alpha = \beta +1$ of this proof, one uses the fact that pushouts commute with colimits. For $\alpha$ limit, one observes that the $\ast$-construction from \cite{MRV} 4.10 has the needed property because
the initial segment $S$ determined by $Q$ has cardinality $<\kappa$ and thus has the colimit $Dp_S$.

(3) A general good diagram $D:P\to\ck$ does not need to have the property that for any $Q\subseteq P$ one either has $\colim_Q D = Dx$ for some $x\in P$ or $\colim_Q D=\colim P$. But, using \cite{MRV} 4.8, any good diagram can be extended to a new one having this property and the same colimit.

}
\end{rem}

\begin{rem}\label{good1}
{
\em
Let $D:P\to\ck$ be a $\kappa$-good $\kappa$-directed diagram with the property from \ref{good}(2), $K=\colim D$ 
and $\lambda\leq\kappa$ a regular cardinal.

(1) Let $h:X\to K$ with $X$ $\lambda$-presentable. Then $h$ factorizes through a component $\delta_x:Dx\to K$ of a colimit cocone. 
Clearly, $h$ factorizes through any $\delta_y$, $x\leq y$. Choose $z\in P$ and let $y$ be a minimal $z\leq y$ such that $h$ factorizes
through $\delta_y$. Then the interval $[z,y)=\{x\in P|z\leq x<y\}$ is $\kappa$-good and $D(zy):Dz\to Dy$
is the composite of $D$ restricted on $[z,y)$. Express $[z,y)$ as a $\lambda$-directed union of subsets $z\in Z_i$ of cardinality
$<\lambda$. Then $Dy$ is a $\lambda$-directed colimit of $\colim_{Z_i} D$. Since $X$ is $\lambda$-presentable,
$h$ factorizes through some of these colimits. Using \ref{good}(2), $\colim_{Z_i} D=Dt_i$. By the minimality of $y$, we have
$Dy=Dt_i$ for some $i$. Hence $D(zy)$ is a composite of a $\kappa$-good diagram of size $<\lambda$. Following \cite{MRV} 4.6, $D(zy)$ belongs to $\lambda$-$\Tc\Po(\cx)$.

(2) Let $u,v:X\to Dx$ satisfy $\delta_xu=\delta_xv$. Since $X$ is $\lambda$-presentable, there is $x\leq y$ such that
$D(xy)u=D(xy)v$. Take a minimal $y$ with this property. Express $[x,y)$ as a $\lambda$-directed union of subsets $x\in Z_i$ 
of cardinality $<\lambda$. Since $X$ is $\lambda$-presentable, $u$ and $v$ are equalized by the morphism from $Dx$ 
to some $\colim_{Z_i} D$. By the minimality of $y$, one again gets that $D(xy)\in\lambda$-$\Tc\Po(\cx)$.
}
\end{rem}

\section{Special injectives}
Let $\cm$ be a special class of morphisms in a locally presentable category $\ck$. For any subset $\cs$ of $\cm$ we get a weak factorization system $(\cof(\cs),\cs^\square)$ and $\cell(\cs)\subseteq\cm$. By $\cm_\lambda$ we denote the subset of $\cm$
consisting of all morphisms having the domain and the codomain in $\ck_\lambda$ (i.e., in a representative set of $\lambda$-presentable objects). 
 
For a subclass $\cs\subseteq\cm$, we say that an object $K$ is $(\cs,\cm)$-\textit{injective} if for any morphism $f:A\to B$ in $\cs$ and any morphism $g:A\to K$ in $\cm$ there is a morphism $h:B\to K$ in $\cm$ such that $hf=g$.

\begin{defi}\label{special0} 
{
\em
Let $\kappa$ be a regular cardinal. We say that an object $K$ is $(\cs,\cm,\kappa)$-\textit{injective} if for any morphism $f:A\to B$ in $\Po(\cs)$ with $A$ $\kappa$-presentable and any morphism $g:A\to K$ in $\cm$ there is a morphism $h:B\to K$ in $\cm$ such that $hf=g$. 
}
\end{defi}

\begin{defi}\label{special1}
{
\em
Let $\ck$ be a locally $\lambda$-presentable category. We say that $\cs\subseteq\cm$ is $\lambda$-\textit{special} if 
\begin{enumerate}
\item [(S1)] $\cs\subseteq\cm_\lambda$,
\item [(S2)] $\cs$ contains all isomorphisms of $\ck_\lambda$,
\item [(S3)] $\lambda$-$\Tc\Po_\lambda(\cs)=\cs$,
\item [(S4)] $gf\in\Po(\cs)$, $f\in\Po(\cs)$ and $g\in\cm$ implies that $g\in\Po(\cs)$. 
\end{enumerate}
The pair $(\cm,\cs)$ is $\lambda$-\textit{special} if $\cm$ is special and $\cs$ is $\lambda$-special.
}
\end{defi}

Observe that (S1) implies that $\cs$ is a set. 

\begin{theo}\label{unique}
Let $\ck$ be a locally $\lambda$-presentable category, $(\cm,\cs)$ a $\lambda$-special pair and $\lambda\leq\kappa$ regular cardinals. Then any two $\cs$-cellular $(\cs,\cm,\kappa)$-injectives of size $\kappa$ are isomorphic.
\end{theo}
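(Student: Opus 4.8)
The plan is to run a categorical back-and-forth between $K$ and $L$: one uses the fat small object argument to exhibit the $\cs$-cellular structure of both objects as filtered colimits of small $\cs$-cellular pieces, and $(\cs,\cm,\kappa)$-injectivity to amalgamate piece by piece. This is the categorical form of the Avil\'es--Brech argument, with Geschke's additive skeletons replaced by the conditions (S1)--(S4) of \ref{special1}.

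First I would fix the representations. Since $\lambda\le\kappa$, $\ck$ is also locally $\kappa$-presentable and every morphism of $\cs$ has $\kappa$-presentable domain and codomain. By the fat small object argument (\cite{MRV}, as recalled before \ref{good1}) write $K=\colim D$ and $L=\colim E$ for $\kappa$-good $\kappa$-directed good diagrams $D\colon P\to\ck$ and $E\colon R\to\ck$ of $\kappa$-presentable objects with $D\perp=O=E\perp$ and all links in $\Po(\cs)$; by \ref{good}(2) one may assume that the colimit of any subdiagram of small size is again a vertex, so that by \ref{good1} all transition maps $D(xy)$ lie in $\cell(\cs)$ and, since $\cm$ (being the class of regular monomorphisms) is closed under directed colimits, all colimit legs $\delta_x\colon Dx\to K$ lie in $\cm$ (cf.\ \ref{cancel}). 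As $K$ and $L$ have size $\kappa$, hence presentability rank $\kappa^+$, each is a $\lambda$-directed colimit of a set of at most $\kappa$ many $\lambda$-presentable objects; fix such generating families $G$ for $K$ and $G'$ for $L$ --- this is where the size hypothesis is used.

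The construction is a $\lambda$-directed, $\lambda$-good diagram $(Z_i)_{i\in I}$ of $\lambda$-presentable $\cs$-cellular objects, built by the same fat small object recipe, with all transition maps in $\Po(\cs)$ and compatible $\cm$-morphisms $u_i\colon Z_i\to K$ and $v_i\colon Z_i\to L$, arranged so that every member of $G$ is absorbed on the $K$-side and every member of $G'$ on the $L$-side; at colimit nodes (colimits of subdiagrams of size $<\lambda$) the objects remain $\lambda$-presentable and the induced legs remain in $\cm$ by \ref{cancel}. A $K$-absorption step reads: given $Z=Z_i$ with $u\colon Z\to K$ and $v\colon Z\to L$ in $\cm$ and a $C\in G$, use directedness of $P$ to factor both $u$ and the structure map $C\to K$ through a single leg $\delta_x\colon Dx\to K$; the cancellation property (C) forces the induced $Z\to Dx$ into $\cm$; apply the fat small object argument to the $\cs$-cellular object $Dx$ to find a $\lambda$-presentable $\cs$-cellular subobject $W\hookrightarrow Dx$ through which both $Z\to Dx$ and $C\to Dx$ factor, and take the next object to be $W$, with $W\to Dx\to K$ in $\cm$. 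Since $Z\to W$ is a $\cm$-morphism between $\lambda$-presentable $\cs$-cellular objects it lies in $\Po(\cs)$, and $(\cs,\cm,\kappa)$-injectivity of $L$ applied to it and to $v\colon Z\to L$ produces $W\to L$ in $\cm$ extending $v$; the $L$-absorption steps are symmetric, using injectivity of $K$. Passing to $M=\colim_{i\in I}Z_i$ gives $u\colon M\to K$ and $v\colon M\to L$ in $\cm$; every $C\in G$ factors through $u$ and $G$ generates $K$, so $u$ is an epimorphism, hence --- being in $\cm$ --- an isomorphism ($\ce\cap\cm$ consists of the isomorphisms, and $\ce$ is the class of all epimorphisms when $\cm$ is the regular monomorphisms); likewise $v$, so $K\cong M\cong L$.

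The step I expect to be the genuine obstacle is the assertion used above that a $\cm$-morphism between $\lambda$-presentable $\cs$-cellular objects lies in $\Po(\cs)$ --- this is exactly the categorical counterpart of Geschke's additivity, and it is here that (S2)--(S4) are needed. I would prove it in two moves: first, that $O\to Z$ lies in $\cs$ for every $\lambda$-presentable $\cs$-cellular $Z$, by representing $Z$ via the fat small object argument, factoring $\id_Z$ through a colimit leg $\delta_y$ chosen minimal (so that $O\to Dy$ lies in $\lambda$-$\Tc\Po(\cs)=\cs$ by (S3), using \ref{good1}) and upgrading the resulting split epimorphism $\delta_y$ to an isomorphism via its monomorphy (again because $\cm$ is closed under directed colimits); and second, that in $(O\to W)=(Z\to W)\circ(O\to Z)$ one may cancel the $\cs$-morphism $O\to Z$ off the right by (S4), concluding $Z\to W\in\Po(\cs)$. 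The remaining points --- keeping every comparison map in $\cm$ through colimit stages, and organizing the back-and-forth $\lambda$-directedly so the objects never exceed $\lambda$-presentability (for $\kappa=\lambda$ a plain chain of length $\lambda$ already suffices) --- are routine given \ref{cancel} and \cite{MRV}.
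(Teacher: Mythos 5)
Your overall architecture (fat small object argument, a back-and-forth, and (S4) to certify that comparison maps lie in $\Po(\cs)$) matches the paper's, and your key lemma --- an $\cm$-morphism between $\lambda$-presentable $\cs$-cellular objects lies in $\Po(\cs)$, obtained from (S3) and (S4) by cancelling $O\to Z$ --- is essentially the mechanism the paper uses (there via \ref{good1} and \cite{MRV} 4.21, which produce $\Po(\cs)$-transitions even between merely $\kappa$-presentable vertices, because the minimality trick only needs the \emph{test} object to be $\lambda$-presentable). But there is a genuine gap at the limit stages of your auxiliary diagram $(Z_i)$, and it propagates. At a limit node $\delta$ you set $Z_\delta=\colim_{j<\delta}Z_j$ and take the induced maps $u_\delta\colon Z_\delta\to K$, $v_\delta\colon Z_\delta\to L$. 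Nothing in the hypotheses forces these to lie in $\cm$: the theorem does not assume that $\cm$ is the class of regular monomorphisms, nor that $\cm$ is closed under $\lambda$-directed colimits --- the paper flags exactly this in \ref{cancel}(2) and \ref{colim}, where reflection of directed colimits by $\ck_\cm\to\ck$ is an \emph{additional} assumption, not a consequence of $(\cm,\cs)$ being $\lambda$-special. Once $u_\delta\notin\cm$ you cannot place $Z_\delta\to W$ in $\cm$, hence not in $\Po(\cs)$ via (S4), hence you cannot invoke $(\cs,\cm,\kappa)$-injectivity at step $\delta+1$; the same defect kills the final step, where you need $u\colon M\to K$ to be a monomorphism for the factorizations of the generators through $u$ to cohere into a splitting. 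A second, related problem: your key lemma needs \emph{both} ends $\lambda$-presentable (a morphism $O\to Z$ in $\Po(\cs)$ is a pushout of a map between $\lambda$-presentable objects along a map to $O$, so $Z$ is forced to be $\lambda$-presentable), so the entire length-$\kappa$ recursion must be kept $\lambda$-good; you call this ``routine,'' but it is precisely the hard combinatorial content, and a linear back-and-forth of length $\kappa$ cannot be kept $\lambda$-good without the $\ast$-construction machinery --- which would reintroduce colimit nodes with the same $\cm$-membership problem.

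The paper avoids both difficulties by never leaving the two given $\kappa$-good $\kappa$-directed diagrams: the back-and-forth selects vertices $K_{i_\alpha}$, $L_{j_\alpha}$ of those diagrams, so all legs to $K$ and $L$ are composites within the diagrams and lie in $\cell(\cs)\subseteq\cm$ for free, limit stages are again vertices by \ref{good}(2), the transitions land in $\Po(\cs)$ by the minimality arguments of \ref{good1} together with (S3) and \cite{MRV} 4.21, and each successor step runs an $\omega$-iteration that produces an actual isomorphism $h_{\alpha+1}$ between vertices, so the limit of the construction is an isomorphism $K\to L$ with no ``monomorphism out of a colimit'' issue. You should either restructure your argument along these lines or add the hypothesis that $\cm$ is closed under $\lambda$-directed colimits (true in the intended examples, but not part of the theorem as stated).
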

\begin{proof}
Let $K$ and $L$ be $\cs$-cellular $(\cs,\cm,\kappa)$-injectives of size $\kappa$. Following \cite{MRV} 4.11 and 4.15(1), 
we express them as colimits of $\kappa$-good $\kappa$-directed diagrams $(k_{ii^\prime}:K_i\to K_{i^\prime})_{i\leq i^\prime\in I}$
and $(l_{jj^\prime}:L_j\to L_{j^\prime})_{j\leq j^\prime\in J}$ of $\kappa$-presentable $\cs$-cellular objects with links 
in $\Po(\cs)$ and such that $K_\perp=L_\perp=O$. Colimit cocones are denoted as $k_i:K_i\to K$ and $l_j:L_j\to L$. The composites 
of these diagrams are $k_\perp:O\to K$ and $l_\perp:O\to L$. At the same time, we can express $K$ and $L$ as $\lambda$-directed colimits of size $\kappa$ consisting of $\lambda$-presentable objects (see \cite{MP}
2.3.11). We well-order these objects as $(X_\alpha)_{\alpha<\kappa}$ (for $K$) and $(Y_\alpha)_{\alpha<\kappa}$ (for $L$).  We will construct subchains $K_{i_\alpha}$ and $L_{j_\alpha}$ of the starting $\kappa$-good $\kappa$-directed diagrams and compatible isomorphisms $h_\alpha:K_{i_\alpha}\to L_{j_\alpha}$ (i.e., $h_\beta k_{i_\alpha i_\beta}=l_{j_\alpha j_\beta}h_\alpha$ for 
$\alpha<\beta$) such that $X_\alpha\to K$  factorizes through $k_{i_{\alpha+1}}:K_{i_{\alpha+1}}\to K$ and $Y_\alpha\to L$ factorizes through $l_{i_{\alpha+1}}:L_{j_{\alpha+1}}\to L$ for each $\alpha<\kappa$. Then $\colim_{\alpha<\kappa} h_\alpha:K\to L$ is the desired isomorphism.

At first, $i_0=\perp=j_0$ and $h_0$ is the identity $K_{i_0}=O\to O=L_{i_0}$. At limit steps we take colimits (by using \ref{good}(2)). Assume that we have $h_\alpha$. Since $X_\alpha$ is $\lambda$-presentable, we can take a minimal $x_1\geq i_\alpha$ such that 
$X_\alpha\to K$ factorizes through $k_{x_1}:K_{x_1}\to K$. Following \ref{good1}(1) and \cite{MRV} 4.21,
$k_{i_\alpha x_1}\in\lambda$-$\Tc\Po(\cs)=\Po\lambda$-$\Tc\Po_\lambda(\cs)=\Po(\cs)$. Thus there is a pushout
$$
\xymatrix@=3pc{
B \ar[r]^{v} & K_{x_1} \\
A \ar [u]^{w} \ar [r]_{u} &
K_{i_\alpha} \ar[u]_{k_{i_\alpha x_1}}
}
$$
with $w\in\cs$. Since $L$ is $(\cs,\cm,\kappa)$-injective, there is $f_1:K_{x_1}\to L$ in $\cm$ such that 
$f_1k_{i_\alpha x_1}=l_{j_\alpha}h_\alpha$. Since $B$ is $\lambda$-presentable, there is $y\geq j_\alpha$ and $q:B\to L_y$ such that $l_yq=f_1v$. Following \ref{good1}(1) and \cite{MRV} 4.21, for a minimal such $y$ we have $l_{j_\alpha y}\in\Po(\cs)$. Since $A$ is 
$\lambda$-presentable and $l_yqw=f_1vw=f_1k_{i_\alpha x_1}u=l_{j_\alpha}h_\alpha u=l_yl_{j_\alpha y}h_\alpha u$, there is $y'\geq y$ such that $l_{yy'}qw=l_{yy'}l_{j_\alpha y}h_\alpha u=l_{j_\alpha y'}h_\alpha u$. Following \ref{good1}(2), for a minimal such $y'$ we have $l_{yy'}\in\Po(\cs)$. Now, there is $\bar{q}:K_{x_1}\to L_{y'}$ such that $\bar{q}v=l_{y'y}q$ and
$\bar{q}k_{i_\alpha x_1}=l_{j_\alpha y'} h_\alpha$. Finally, there is $y_1\geq y'$ such that $Y_\alpha\to L$ factorizes through $l_{y_1}$. Following \ref{good1}(1), for a minimal such $y_1$ we have $l_{y'y_1}\in\Po(\cs)$. Hence $l_{j_\alpha y_1}\in\Po(\cs)$ and, for $h_{\alpha 1}=l_{y'y_1}\bar{q}$ we have 
$h_{\alpha 1}k_{i_\alpha x_1}=l_{y'y_1}\bar{q}k_{i_\alpha x_1}=l_{y'y_1}l_{j_\alpha y'}h_\alpha=l_{j_\alpha y_1}h_\alpha$ and $l_{y_1}h_{\alpha 1}=f_1$. The second equality is a consequence of 
$l_{y_1}h_{\alpha_1}v=l_{y_1}l_{y'y_1}\bar{q}v=l_{y'}l_{yy'}q=l_yq=f_1v$ and 
$l_{y_1}h_{\alpha 1}k_{i_\alpha x_1}=l_{y_1}l_{y'y_1}\bar{q}k_{i_\alpha x_1}=l_{y'}l_{j_\alpha y'}h_\alpha=l_{j_\alpha}h_\alpha=
f_1k_{i_\alpha x_1}$.

Following \cite{MRV} 4.5, $l_{y_1}\in\cell(\cs)\subseteq\cm$. Since $\cm$ satisfies (C) and $l_{y_1}h_{\alpha 1}=f_1$, we have $h_{\alpha 1}\in\cm$. Hence $h_{\alpha 1}\in\Po(\cs)$ because $k_{i_\alpha x_1}$, $l_{j_\alpha y_1}$ and $h_\alpha$ belong to $\Po(\cs)$. Since $K$ is 
$(\cs,\cm,\kappa)$-injective, there is $t_1:L_{y_1}\to K$ in $\cm$ such that $t_1h_{\alpha 1}=k_{x_1}$. Analogously as above, there is $g_1:L_{x_1}\to K_{x_2}$ such that  $t_1= k_{x_2}g_1$ and $g_1h_{\alpha 1}=k_{x_1x_2}$. Again, $k_{x_2}\in\cm$ and thus $g_1\in\cm$. 
Since $h_{\alpha 1}, k_{x_1x_2}\in\Po(\cs)$, we have $g_1\in\Po(\cs)$. Since $L$ is $(\cs,\cm,\kappa)$-injective, there is $f_2:K_{x_2}\to L$ in $\cm$ with $f_2g_1=l_{y_1}$. We have $f_2k_{x_1x_2}=f_2g_1h_{\alpha 1}=l_{y_1}h_{\alpha 1}=f_1$. In the same way as above, we get $h_{\alpha 2}:K_{x_2}\to L_{y_2}$ in $\Po(\cs)$ such that $h_{\alpha 2}g_1=l_{y_1y_2}$. Hence 
$h_{\alpha 2}k_{x_1x_2}=l_{y_1y_2}h_{\alpha 1}$. By continuing this procedure for all $n<\omega$, we get 
$K_{i_{\alpha+1}}=\colim_{n<\omega}K_{x_n}$, $L_{j_{\alpha+1}}=\colim_{n<\omega}L_{y_n}$ and an isomorphism 
$h_{\alpha+1}=\colim_{n<\omega}h_{\alpha n}$. 
\end{proof}

\begin{rem}\label{re3.2}
{
\em
(1) Because $l_y\in\cm$ is a monomorphism, we could take $y'=y$ in the proof above. Our argument would work in a more general
situation when $\cm$ does not consist of monomorphisms.

(2) In this generality, one cannot expect the existence of an $\cs$-cellular $(\cs,\cm,\kappa)$-injective of a given size $\kappa$.
Let $\cm$ be special and $\cs$ consist of isomorphisms between $\lambda$-presentable objects. Then $\Po(S)$ consists
of isomorphisms and $(\cm,\cs)$ is $\lambda$-special. Then any object is $(\cs,\cm,\kappa)$-injective but $O$ is the only
$\cs$-cellular object and it is finitely presentable.
}
\end{rem}  

\begin{propo}\label{exist}
Let $\ck$ be a locally $\lambda$-presentable category, $(\cm,\cs)$ a $\lambda$-special pair and $\lambda\leq\kappa$ regular cardinals
such that $|\ck_\lambda|\leq\kappa$ and $\kappa^{<\lambda}=\kappa$. Assume that there is a non-initial object $N$ such that $O\to N$ is in 
$\cs$. Then there exists an $\cs$-cellular $(\cs,\cm,\kappa)$-injective of size $\kappa$.
\end{propo}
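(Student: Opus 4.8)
The plan is to build the desired object by a transfinite small-object-type construction of length $\kappa$, at each step curing one potential failure of $(\cs,\cm,\kappa)$-injectivity while keeping everything $\cs$-cellular and of bounded size. First I would enumerate, in a single $\kappa$-sequence with each entry repeated cofinally often, all the "lifting problems" that a candidate object might have to solve: pairs $(f\colon A\to B,\ g\colon A\to P)$ where $f\in\Po_\lambda(\cs)$ is (a representative of) a pushout of an $\cs$-morphism with $A,B\in\ck_\lambda$ and $g$ ranges over the $\cm$-morphisms into the current object. The hypotheses $|\ck_\lambda|\le\kappa$ and $\kappa^{<\lambda}=\kappa$ are exactly what make this bookkeeping possible: each object we build along the way will be a $\lambda$-directed colimit of $\le\kappa$ many $\lambda$-presentable objects, so by local $\lambda$-presentability it has at most $\kappa$ morphisms out of each fixed $\lambda$-presentable domain, and hence at most $\kappa$ such lifting problems in total.

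The construction: set $P_0=N$ (using the hypothesised $O\to N$ in $\cs$ to guarantee $P_0$ is $\cs$-cellular and non-initial, so its size is a well-defined cardinal $\le\lambda\le\kappa$); at a limit stage take the colimit of the chain; at a successor stage $\alpha+1$, look at the $\alpha$-th enumerated lifting problem $(f\colon A\to B,\ g\colon A\to P_\alpha)$ with $g\in\cm$, form the pushout of $f$ along $g$, and let $P_\alpha\to P_{\alpha+1}$ be the pushout leg. Since $f\in\Po(\cs)$ and $\cs\subseteq\cm$ with $\cm$ special, the pushout leg lies in $\Po(\cs)\subseteq\cm$, so all transition maps $P_\alpha\to P_\beta$ are $\cs$-cellular, hence in $\cm$, and $O\to P_\kappa$ is $\cs$-cellular by \ref{not} (transfinite composite of $\Po(\cs)$-morphisms). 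Set $P=P_\kappa$. A routine cardinal-arithmetic estimate — each step adds a $\lambda$-presentable pushout corner, there are $\kappa$ steps, and $\kappa^{<\lambda}=\kappa$ controls the limit stages — shows $P$ is a $\lambda$-directed colimit of $\le\kappa$ many $\lambda$-presentable objects, so by \cite{BR} 4.2 its size is $\le\kappa$; and it is $\ge\kappa$ because, using the enumeration, one checks it genuinely absorbs $\kappa$ many non-trivial pushouts, so the size is exactly $\kappa$.

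It remains to verify that $P$ is $(\cs,\cm,\kappa)$-injective. Given $f\colon A\to B$ in $\Po(\cs)$ with $A$ $\kappa$-presentable and $g\colon A\to P$ in $\cm$, I first reduce to the case $A,B\in\ck_\lambda$: by (S3), $f\in\lambda\text{-}\Tc\Po_\lambda(\cs)$, so it suffices to solve the lifting problem against each link, and each such link is a pushout of an $\cs$-morphism with $\lambda$-presentable domain and codomain — and here (S4) is used to ensure that, having lifted an initial segment of the transfinite composite, the remaining factor is still in $\Po(\cs)$ so the induction continues. For a single such elementary $f$, since $A$ is $\lambda$-presentable, $g$ factors through some $P_\alpha$ via a map $g'$, and since $P_\alpha\to P$ is in $\cm$ and $\cm$ satisfies the cancellation property (C), $g'\in\cm$; thus $(f,g')$ appears cofinally in our enumeration, so at some later stage $\beta+1$ we chose exactly the pushout of $f$ along $g'$, giving a map $B\to P_{\beta+1}$ whose composite with $P_{\beta+1}\to P$ is the desired lift $h$, and $h\in\cm$ since it is (pushout leg) followed by ($\cs$-cellular map), both in $\cm$, composed appropriately — more precisely $h$ factors through $P_{\beta+1}$ by an $\cm$-map because the pushout leg $B\to P_{\beta+1}$ lies in $\Po(\cs)\subseteq\cm$ and $P_{\beta+1}\to P\in\cm$. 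The main obstacle is the reduction step: making sure that decomposing an arbitrary $\Po(\cs)$-morphism with $\kappa$-presentable (not merely $\lambda$-presentable) domain into elementary pushouts and solving the lifting problem link-by-link is legitimate — this is where (S3) and especially the "division" property (S4) do the real work, and one must track that partial lifts assemble into a coherent cocone, which they do by smoothness of the chain and $\lambda$-presentability of the domains.
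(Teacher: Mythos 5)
Your overall strategy --- a transfinite iteration of length $\kappa$ of pushouts of $\cs$-morphisms, with the hypotheses $|\ck_\lambda|\leq\kappa$ and $\kappa^{<\lambda}=\kappa$ controlling the bookkeeping --- is the same as the paper's, but two steps have genuine gaps.

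First, the injectivity verification. You enumerate only lifting problems whose attaching map $g$ lies in $\cm$, and then try to reduce a general problem $(h\colon C\to D$ in $\Po(\cs)$ with $C$ $\kappa$-presentable, $u\colon C\to P$ in $\cm)$ to such problems by decomposing $h$ via (S3). This reduction does not work: (S3) is a statement about $\cs$, not about $\Po(\cs)$, and the links of the relevant decomposition $\Po(\cs)=\lambda$-$\Tc\Po(\cs)$ are pushouts of $\cs$-morphisms along \emph{arbitrary} maps with large codomains, not morphisms between $\lambda$-presentable objects as you claim. Moreover no decomposition is needed: by definition $h$ is the pushout of a single $f\colon A\to B$ in $\cs$ along some $v\colon A\to C$, and after factoring $u$ through a stage as $u=p_iu'$ with $u'\in\cm$, the problem reduces to the span $(u'v\colon A\to P_i,\ f)$ --- but $u'v$ need \emph{not} be in $\cm$ (only $u'$ is, $v$ being an arbitrary attaching map), so this span is absent from your enumeration. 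The paper therefore enumerates all spans $(u\colon A\to K_i,\ f\in\cs)$ with $u$ arbitrary and pushes them all out; the induced map $D\to P\to K_{i+1}\to K_\kappa$ is then in $\cm$ because the pushout $P$ of $f$ along $u'v$ coincides, by pushout pasting, with the pushout of $h$ along the $\cm$-morphism $u'$, and $P$ maps $\cs$-cellularly into $K_{i+1}$.

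Second, the size. That the resulting object is $\kappa^+$-presentable is fine, but ``it genuinely absorbs $\kappa$ many non-trivial pushouts'' is not a proof that it fails to be $\kappa$-presentable, and this is exactly where the hypothesis on $N$ must be used (you use $N$ only to seed the construction, which is not its purpose). The paper's argument: the coproduct injection $i_1\colon K_\kappa\to K_\kappa\coprod N$ lies in $\Po(\cs)$ as a pushout of $O\to N$, and is not an isomorphism because $N$ is non-initial; if $K_\kappa$ were $\kappa$-presentable, $(\cs,\cm,\kappa)$-injectivity would produce a retraction $h\in\cm$ of $i_1$, forcing $h$ and hence $i_1$ to be isomorphisms, a contradiction. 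Without some such argument your construction could in principle stabilize and produce an object of size $<\kappa$.
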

\begin{proof}
Let $K$ be a $\kappa^+$-presentable object of $\ck$. Such an object always exists, we can take $K=O$. Following \cite{MP} 2.3.11 and 2.3.4, there are $\leq\kappa$ morphisms from $A\in\ck_\lambda$ to $K$. In fact, since $\lambda\triangleleft\kappa^+$, $K$ is 
$\lambda$-directed colimit of $\lambda$-presentable objects $X$ over a diagram of cardinality $\leq\kappa$. Since any morphism 
$A\to K$ factorizes through some $X$, we get our estimate. Thus there is $\leq\kappa$ of spans 
$$
\xymatrix@C=3pc@R=3pc{
K &\\
A\ar[u]^{u} \ar[r]_{f} & B
}
$$
where $f\in\cs$. We index these spans by ordinals $i\leq\alpha$ where $\alpha\leq\kappa$. We construct a smooth chain 
$c_{ij}:K'_i\to K'_j$, $i\leq j\leq\alpha$ starting with $K'_0=K$. Let $K'_1$ be the pushout 
$$
\xymatrix@=4pc{
K \ar[r]^{c_{01}} & K'_1 \\
A_0\ar [u]^{u_0} \ar [r]_{f_0} & B_0 \ar[u]_{}
}
$$
At limit steps we take colimits and, given $K'_i$ we take the pushout
$$
\xymatrix@=4pc{
K'_i \ar[r]^{c_{i,i+1}} & K'_{i+1} \\
A_i\ar [u]^{c_{0i}u_i} \ar [r]_{f_i} & B_i \ar[u]_{}
}
$$
Then the object $K^\ast=K'_\alpha$ is $\kappa^+$-presentable and $\cs$-cellular and the morphism $c_K=c_{0\alpha}:K\to K^\ast$ 
is $\cs$-cellular. Form a new smooth chain $(k_{ij}:K_i\to K_{j})_{i<j\leq\kappa}$ such that $K_0=K$ and $K_{i+1}=K_i^\ast$ 
and $k_{i,i+1}=c_{K_i}$. Then $K_\kappa$ is $\kappa^+$-presentable and $\cs$-cellular. 

Consider $h:C\to D$ in $\Po(\cs)$ with $C$ $\kappa$-presentable and $u:C\to K_\kappa$ in $\cm$. Since $C$ is $\kappa$-presentable, there is a factorization of $u$ through $u':C\to K_i$ for some $i<\kappa$. Hence $u'\in\cm$. We have a pushout
$$
\xymatrix@=3pc{
C \ar[r]^{h} & D \\
A \ar [u]^{v_1} \ar [r]_{f} &
B \ar[u]_{v_2}
}
$$
with $f\in\cs$ and a span
$$
\xymatrix@C=3pc@R=3pc{
K_i &\\
A\ar[u]^{u'v_1} \ar[r]_{f} & B
}
$$
which is one of $(u,f)$ for $K_i$. We can assume that this is the first span $(u_0,f_0)$. Consider the pushout
$$
\xymatrix@=3pc{
K_i \ar[r]^{p} & P \\
A \ar [u]^{u'v_1} \ar [r]_{f} &
B \ar[u]_{q}
}
$$
Since $K_{i+1}$ is obtained by iteratively taking pushouts starting with $P$, the induced morphism $t:P\to K_{i+1}$ is
$\cs$-cellular. Since 
$$
\xymatrix@=3pc{
K_i \ar[r]^{p} & P \\
C \ar [u]^{u'} \ar [r]_{h} &
D \ar[u]_{q'}
}
$$
is a pushout, $u=k_iu'=k_{i+1}c_{K_i}u'=k_{i+1}tpu'=k_{i+1}tq'h$ and $k_{i+1}tq'$ belongs to $\cm$, $K_\kappa$ is 
$(\cs,\cm,\kappa)$-injective.

The object $K_\kappa$ is $\kappa^+$-presentable. Assume that it is $\kappa$-presentable. Let $N$ be a non-initial object
such that $O\to N$ belongs to $\cs$. Since
$$
\xymatrix@=3pc{
K_\kappa \ar[r]^{i_1} & K_\kappa\coprod N \\
O \ar [u]^{} \ar [r]_{} &
N \ar[u]_{}
}
$$
is a pushout, the coproduct injection $i_1$ belongs to $Po(\cs)$. Assume that $i_1$ is an isomorphism and consider morphisms
$u,v:N\to L$. Since $\id_{K_\kappa}\coprod ui_1=\id_{K_\kappa}=\id_{K_\kappa}\coprod vi_1$, we have 
$\id_{K_\kappa}\coprod u=\id_{K_\kappa}\coprod v$ and thus $u=v$. Since $N$ is not initial, $i_1$ is not an isomorphism.
Since $K_\kappa$ is $\kappa$-presentable and $(\cs,\cm,\kappa)$-injective, there is 
$h:K_\kappa\coprod N\to K_\kappa$ in $\cm$ such that $hi_1=\id_{K_\kappa}$. Thus $h$ is an isomorphism and therefore $i_1$
is an isomorphism. We have proved that $K_\kappa$ is of size $\kappa$. 
\end{proof}

\begin{rem}\label{universal}
{
\em
We have proved that any $\kappa^+$-presentable $\cs$-cellular object is an $\cs$-cellular subobject of an $\cs$-cellular 
$(\cs,\cm,\kappa)$-injective object of size $\kappa$.
}
\end{rem} 

\begin{lemma}\label{special2}
Let $\ck$ be a locally $\lambda$-presentable coregular category whose class $\cm$ of regular monomorphisms is special and quotients
of $\lambda$-presentable objects are $\lambda$-presentable. Then $\cm_\lambda$ is $\lambda$-special.
\end{lemma}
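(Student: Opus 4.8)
The plan is to verify the four clauses (S1)--(S4) of Definition~\ref{special1} for $\cs=\cm_\lambda$. Clause (S1) is the tautology $\cm_\lambda\subseteq\cm_\lambda$, and (S2) holds because every isomorphism is a split monomorphism, hence a regular monomorphism. For (S3): since $\cm$ is special, $\cm=\cell(\cm)=\Tc(\Po(\cm))$, and since $\ck$ is coregular, $\Po(\cm)=\cm$; hence $\cm=\Tc(\cm)$, so any composite of a chain of morphisms from $\cm$ again lies in $\cm$. Thus a morphism of $\lambda\text{-}\Tc\Po_\lambda(\cm_\lambda)$, being the composite of a chain of length $<\lambda$ with links in $\Po_\lambda(\cm_\lambda)\subseteq\Po(\cm)=\cm$, lies in $\cm$; and since $\ck_\lambda$ is closed under $\lambda$-small colimits in $\ck$ and the chain has length $<\lambda$ with $\lambda$-presentable links, every object of the chain, in particular its domain and codomain, is $\lambda$-presentable. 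So the morphism lies in $\cm_\lambda$, and the reverse inclusion $\cm_\lambda\subseteq\lambda\text{-}\Tc\Po_\lambda(\cm_\lambda)$ holds since each $g\in\cm_\lambda$ is the pushout of $g$ along the identity of its domain, hence already a single-step member of $\Po_\lambda(\cm_\lambda)$. The real content is (S4).

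For (S4) I would first record a normal form: a morphism $h$ lies in $\Po(\cm_\lambda)$ if and only if it is a pushout of a $\cm_\lambda$-morphism along a $\cm$-morphism. The ``if'' direction is immediate; for ``only if'', write $h$ as the pushout of $b'\in\cm_\lambda$ along some $p$, factor $p=m\circ e$ with $e\in\ce$ and $m\in\cm$, and paste: $h$ is the pushout along $m$ of the pushout $b$ of $b'$ along $e$. Here the domain of $b$ is an $\ce$-quotient of the $\lambda$-presentable domain of $b'$, hence $\lambda$-presentable by hypothesis; its codomain is a pushout of $\lambda$-presentable objects, hence $\lambda$-presentable; and $b\in\cm$ by coregularity, so $b\in\cm_\lambda$. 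Applying this to the hypothesis $gf\in\Po(\cs)$, we may assume $gf$ is the pushout of $b\colon B\to B^{*}$ in $\cm_\lambda$ along a regular monomorphism $m\colon B\to X$, with second leg $\beta\colon B^{*}\to Z$ again a regular monomorphism; note also that $f\in\cm$ and $gfm=\beta b$.

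Next form the pullback $C=Y\times_{Z}B^{*}$ of $g$ and $\beta$, with legs $i\colon C\to Y$ and $j\colon C\to B^{*}$; both lie in $\cm$, which is stable under pullback in a coregular category. Since $g$ is a monomorphism and $\beta b=g(fm)$ factors through $g$, the morphism $b$ factors as $b=j\circ b_{1}$ with $b_{1}\in\cm$ by the cancellation property of $\cm$, and moreover $i b_{1}=fm$ because $g(ib_{1})=\beta(jb_{1})=\beta b=g(fm)$ and $g$ is monic. The plan is then to prove: (a) the square with edges $j,\beta,i,g$ is a pushout, so that $g$ is the pushout of $j$ along $i$; and (b) $C\in\cm_\lambda$. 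For (b): $C$ is a regular subobject of the $\lambda$-presentable object $B^{*}$; passing to its cokernel pair $B^{*}\amalg_{C}B^{*}$, which is an $\ce$-quotient of the $\lambda$-presentable object $B^{*}\amalg B^{*}$ and hence $\lambda$-presentable by hypothesis, one recovers $C$ as the equalizer of this cokernel pair and concludes, using coregularity, that $C$ is $\lambda$-presentable; so $j\in\cm_\lambda$, and granting (a) we get $g\in\Po(\cm_\lambda)=\Po(\cs)$.

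The main obstacle is (a). For the universal property, let $c\colon Y\to V$ and $d\colon B^{*}\to V$ satisfy $ci=dj$. Using $b=jb_{1}$, $ib_{1}=fm$ and $ci=dj$ one checks $cf\circ m=d\circ b$, so the pushout presentation $Z=X\amalg_{B}B^{*}$ (whose $X$-leg is the morphism $gf$) yields a unique $e\colon Z\to V$ with $e\,\beta=d$ and $e\,(gf)=cf$. The first equation is what is wanted on the $B^{*}$-side; for $eg=c$ one uses the pushout presentation $Y=X\amalg_{A}A^{*}$ of $f$: the $X$-leg is handled by $e\,(gf)=cf$, and the $A^{*}$-leg must be handled by tracing the action of $g$ on $A^{*}$ and invoking $ci=dj$ once more. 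The delicate point --- and the step I expect to cost the most --- is precisely that $g$ carries the part of $Y$ lying over $A^{*}$ into the cell $\beta(B^{*})$, compatibly with the identification defining $C$; in other words, that the pushout $Z=X\amalg_{B}B^{*}$ of regular monomorphisms behaves like an amalgam of $X$ and $B^{*}$ over $B$. This is where coregularity --- pushout-stability of regular monomorphisms and the resulting compatibility of such pushouts with pullbacks --- is essential.
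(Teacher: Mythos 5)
Your verification of (S1)--(S3) and your normal form for $\Po(\cm_\lambda)$ (factor the attaching map as an $\ce$-morphism followed by an $\cm$-morphism and invoke the hypothesis on quotients) are fine; the normal form is exactly Remark~\ref{aviles}(1). The proof of (S4), however, has a genuine gap: the two claims (a) and (b) on which it rests are both left unproved, and (a) is in fact \emph{false}. Take $\ck=\Bool$ with $\cm$ the monomorphisms. Let $B=2$, let $B^{*}=2^{2}$ with atoms $p,p'$, let $X=2^{2}$ with atoms $a,a'$, and let $m\colon B\to X$ be the unique map. Then $Z=X\amalg_{B}B^{*}=X\amalg B^{*}\cong X\times X$, with $gf=\Delta\colon X\to X\times X$ the diagonal and $\beta(p)=(1,0)$. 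Let $Y\subseteq X\times X$ be the eight-element subalgebra with atoms $(a,a)$, $(a',0)$, $(0,a')$; it contains $\Delta(X)$, so $\Delta=g\circ f$ with $f\colon X\to Y$ and $g\colon Y\to Z$ monomorphisms between finite algebras (so all hypotheses of (S4) hold, and its conclusion is trivially true here). Every element of $Y$ above the atom $(a,0)$ of $Z$ is also above $(0,a)$, so $g(Y)\cap\beta(B^{*})=\{0,1\}$, i.e.\ your pullback $C=Y\times_{Z}B^{*}$ is the two-element algebra. Hence $Y\amalg_{C}B^{*}$ is the free product $2^{3}\amalg 2^{2}\cong 2^{6}$, with $64$ elements, which cannot map isomorphically onto the $16$-element algebra $Z$. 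So the square with edges $i,j,g,\beta$ is \emph{not} a pushout: the legs $g$ and $\beta$ are jointly epimorphic, but $Z$ is not the amalgam of $Y$ and $B^{*}$ over their intersection, and coregularity does not rescue this. Claim (b) is also unsupported as stated: the hypothesis only gives closure of $\ck_\lambda$ under $\ce$-quotients, and $\lambda$-presentable objects are closed under $\lambda$-small \emph{colimits} but not under equalizers, so realizing $C$ as the equalizer of the ($\lambda$-presentable) cokernel pair of $j$ does not make $C$ $\lambda$-presentable.

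The structural lesson is that the $\lambda$-presentable witness for $g\in\Po(\cs)$ must be built by colimits, not by a pullback. This is what the paper does: writing $f$ and $gf$ as pushouts of $h_0,h\in\cs$ along $u_0\colon X_0\to A$ and $u\colon X\to A$, it expresses $A$ as a $\lambda$-directed colimit of $\lambda$-presentable $\cm$-subobjects $u_i\colon X_i\to A$ through which $u$ factors, pushes $h$ out to morphisms $h_i\colon X_i\to Y_i$ in $\cs$, and observes that $(u_i,v_i)\colon h_i\to gf$ is a $\lambda$-directed colimit in the arrow category; since $h_0$ is $\lambda$-presentable there, the map $(u_0,gv_0)\colon h_0\to gf$ factors through some $h_i$ via $(r,s)$. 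The pushout $P=X_i\amalg_{X_0}Y_0$ is then $\lambda$-presentable for free, the pasting law for pushouts shows first that $f$ is the pushout of $q\colon X_i\to P$ along $u_i$ and then that $g$ is the pushout of the induced morphism $t\colon P\to Y_i$ along the map $w\colon P\to B$, and finally $t\in\cm$ follows from $v_it=gw\in\cm$ by the cancellation property (C) of Remark~\ref{cancel} --- the only point where one uses that $\cm$ consists of regular monomorphisms. Your argument would need to be replaced by something of this shape; the pullback construction cannot be repaired.
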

\begin{proof}
Since (S1)-(S3) are evident, we have to prove (S4). Consider $f:A\to B$ and $g:B\to C$ such that $gf\in\Po(\cs)$, $f\in\Po(\cs)$ 
and $g\in\cm$. Thus there are pushouts
$$
\xymatrix@=3pc{
A \ar[r]^{f} & B \\
X_0 \ar [u]^{u_0} \ar [r]_{h_0} &
Y_0 \ar[u]_{v_0}
}
$$
and
$$
\xymatrix@=3pc{
A \ar[r]^{gf} & C \\
X \ar [u]^{u} \ar [r]_{h} &
Y \ar[u]_{v}
}
$$
where $h_0,h\in\cs$.

Following our assumption and \ref{cancel}(2), $A$ is a $\lambda$-directed colimit $u_i:X_i\to A$ where $X_i$ are $\lambda$-presentable
and $u_i\in\cm$ for each $i\in I$ and such that $u=u_iu'_i$ for each $i\in I$; thus $u'_i:X\to X_i$. Form pushouts
$$
\xymatrix@=3pc{
X_i \ar[r]^{h_i} & Y_i \\
X \ar [u]^{u'_i} \ar [r]_{h} &
Y \ar[u]_{v'_i}
}
$$
We get commutative rectangles
$$
\xymatrix@C=3pc@R=3pc{
A \ar [r]^{gf}  & C \\
X_i \ar[r]^{h_i} \ar [u]^{u_i}  & Y_i \ar [u]_{v_i}\\
X \ar [r]_h \ar [u]^{u'_i} & Y \ar [u]_{v'i}
}
$$ 
where $v_i$ are the induced morphisms. Since the lower squares and the outside rectangles are pushouts, the upper squares are pushouts.
Moreover, $(u_i,v_i):h_i\to gf$, $i\in I$ is a $\lambda$-directed colimit in the category $\ck^\to$ of morphisms of $\ck$. 
Since the objects $Y_i$ are $\lambda$-presentable, $h_i\in\cs$ for each $i\in I$. We have a morphism $(u_0,gv_0):h_0\to gf$ in
$\ck^\to$. Since $h_0$ is $\lambda$-presentable in $\ck ^\to$, there are $i\in I$ and $(r,s):h_0\to h_i$ such that 
$(u_0,gv_0)=(u_i,v_i)(r,s)$. Consider the pushout
$$
\xymatrix@=3pc{
X_i \ar[r]^{q} & P \\
X_0 \ar [u]^{r} \ar [r]_{h_0} &
Y_0 \ar[u]_{p}
}
$$
We get the commutative rectangle
$$
\xymatrix@C=3pc@R=3pc{
A \ar [r]^{f}  & B \\
X_i \ar[r]^{q} \ar [u]^{u_i}  & P \ar [u]_{w}\\
X_0 \ar [r]_{h_0} \ar [u]^{r} & Y_0 \ar [u]_{p}
}
$$ 
where $w$ is the induced morphism, i.e., $wp=v_0$. Since the outside rectangle and the lower square are pushouts, the upper square 
is a pushout. Since the square
$$
\xymatrix@=3pc{
X_i \ar[r]^{h_i} & Y_i \\
X_0 \ar [u]^{r} \ar [r]_{h_0} &
Y_0 \ar[u]_{s}
}
$$
commutes, we also get a morphism $t:P\to Y_i$ such that $tq=h_i$ and $tp=s$. We have $gwq=gfu_i=v_ih_i=v_itq$ and $gwp=gv_0=v_is=v_itp$ and thus $gw=v_it$.  

Consider the commutative rectangle
$$
\xymatrix@=3pc{
A \ar[r]^{f} & B \ar[r]^{g} & C \\
X_i \ar[r]_{q} \ar[u]^{u_i} & P \ar[r]_{t} \ar[u]_{w} & Y_i \ar[u]_{v_i}
}
$$
Since the outside rectangle and the left-hand square are pushouts, the right-hand square is a pushout. Since $u_i\in\cm$, we get
$w\in\cm$ and thus $gw\in\cm$. Following (C), $t\in\cm$ and thus $g\in\Po(\cs)$.
\end{proof}

\section{Applications}
An object $K$ of size $\kappa$ will be called $\cm$-\textit{saturated} if for any $f:A\to B$ in $\cm$ with $A,B$ $\kappa$-presentable
and any $g:A\to K$ in $\cm$ there is $h:B\to K$ in $\cm$ such that $hf=g$.

\begin{propo}\label{satur}
Let $\ck$ be locally $\lambda$-presentable category and $(\cm,\cs)$ a $\lambda$-special pair such that $\cm$ contains split monomorphisms, is closed under directed colimits and $\cm=\cof(\cs)$. Let $\lambda\leq\kappa$ be a regular cardinal such that 
$|\ck_\lambda|\leq\kappa$ and $\kappa^{<\lambda}=\kappa$. Then an $(\cs,\cm,\kappa)$-injective object of size $\kappa$ is 
$\cm$-saturated.  
\end{propo}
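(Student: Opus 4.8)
The plan is to realize $f$ as a retract, under $A$, of a \emph{short} $\cs$-cellular morphism — a transfinite composite of length $<\kappa$ — then to lift $g$ along that morphism one $\Po(\cs)$-step at a time using the $(\cs,\cm,\kappa)$-injectivity of $K$, and finally to compose the resulting lift with the retraction. Throughout, the delicate point is to keep all maps inside $\cm$: producing \emph{some} $h$ with $hf=g$ is easy (e.g.\ from the lifting property of $\cm={}^\square(\cs^\square)$), but such an $h$ need not lie in $\cm$. This is exactly why the intermediate cellular morphism must be short — only a genuine transfinite \emph{chain} has directed colimits at its limit steps, and $\cm$ is closed under directed colimits — so the fat small object argument, which is what makes the shortness possible, is the crucial ingredient, and I expect the management of $\cm$-membership to be the main obstacle.

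In detail: let $K$ be $(\cs,\cm,\kappa)$-injective of size $\kappa$, let $f\colon A\to B$ lie in $\cm$ with $A,B$ $\kappa$-presentable, and let $g\colon A\to K$ lie in $\cm$. First I would apply the small object argument (\cite{B}) to factor $f=nc$ with $c\colon A\to C$ an $\cs$-cellular morphism and $n\colon C\to B$ in $\cs^\square$. Since $f\in\cm=\cof(\cs)={}^\square(\cs^\square)$ has the left lifting property with respect to $n\in\cs^\square$, and $nc=f$, the commutative square whose horizontal edges are $c\colon A\to C$ and $\id_B\colon B\to B$ and whose vertical edges are $f\colon A\to B$ and $n\colon C\to B$ admits a diagonal $r\colon B\to C$ with $rf=c$ and $nr=\id_B$; thus $f$ is a retract of $c$ in $A\backslash\ck$. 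Because $\lambda\le\kappa$, $\ck$ is locally $\kappa$-presentable and, by (S1), $\cs$ consists of morphisms between $\kappa$-presentable objects, so the fat small object argument (\cite{MRV}~4.11 and 4.15; cf.\ \ref{good}) lets us write $C=\colim D$ for a $\kappa$-good $\kappa$-directed diagram $D\colon P\to\ck$ with $D\perp=A$, all $Dp$ $\kappa$-presentable, links in $\Po(\cs)$ and composite $c$. Since $B$ is $\kappa$-presentable, $r$ factors as $r=\delta_x r'$ through a colimit component $\delta_x\colon Dx\to C$; since $A$ is $\kappa$-presentable and $\delta_x(r'f)=rf=c=\delta_x D(\perp x)$, after replacing $x$ by a suitable $x'\ge x$ and $r'$ by $D(xx')r'$ I may further assume $r'f=D(\perp x)$. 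Then $n':=n\delta_x$ satisfies $n'r'=\id_B$ and $n'D(\perp x)=f$, so $f$ is a retract in $A\backslash\ck$ of $c':=D(\perp x)\colon A\to Dx$; in particular $r'$ is a split monomorphism, hence $r'\in\cm$.

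Now $\downarrow x$ has cardinality $<\kappa$ and $\colim_{\downarrow x}D=Dx$, so $c'$ is the composite of a good diagram of size $<\kappa$ with links in $\Po(\cs)$; by \cite{MRV}~4.6 (cf.\ the argument in \ref{good1}(1)) it follows that $c'\in\kappa$-$\Tc\Po(\cs)$, say $c'=e_{0\mu}$ for a smooth chain $(e_{ij}\colon E_i\to E_j)_{i\le j\le\mu}$ with $\mu<\kappa$, $E_0=A$, $E_\mu=Dx$ and $e_{i,i+1}\in\Po(\cs)$. Since $\cs$-morphisms have $\lambda$-presentable — hence $\kappa$-presentable — domains and codomains, and $\kappa$-presentable objects are closed under colimits of diagrams of size $<\kappa$, an induction on $i\le\mu$ shows that every $E_i$ is $\kappa$-presentable. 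I then construct $k_i\colon E_i\to K$ in $\cm$ by transfinite recursion: set $k_0=g$; given $k_i\in\cm$, apply the $(\cs,\cm,\kappa)$-injectivity of $K$ to the $\Po(\cs)$-morphism $e_{i,i+1}$ (whose domain $E_i$ is $\kappa$-presentable) to obtain $k_{i+1}\in\cm$ with $k_{i+1}e_{i,i+1}=k_i$; for $i$ limit, take $k_i$ to be the morphism induced on $E_i=\colim_{j<i}E_j$ by the compatible cocone $(k_j)_{j<i}$, which lies in $\cm$ because the colimit is directed and $\cm$ is closed under directed colimits (cf.\ \ref{cancel}(2)). With $k:=k_\mu\colon Dx\to K$ we then have $k\in\cm$ and $kc'=k_\mu e_{0\mu}=k_0=g$. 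Finally $h:=kr'\colon B\to K$ lies in $\cm$, being a composite of $\cm$-morphisms, and $hf=k(r'f)=kc'=g$, which is what was required.
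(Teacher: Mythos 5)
Your proof is correct and takes essentially the same route as the paper: factor $f$ by the small object argument, use $f\in{}^\square(\cs^\square)$ to produce a diagonal exhibiting $f$ as a retract (under $A$) of the cellular part, factor that section through a $\kappa$-presentable stage so that it is a split monomorphism and hence in $\cm$, and then lift $g$ by transfinite recursion using $(\cs,\cm,\kappa)$-injectivity at successor steps and closure of $\cm$ under directed colimits at limit steps. The only difference is organizational: the paper factors the diagonal directly through a stage $A_i$, $i<\kappa$, of the small-object-argument chain, whereas you insert the fat small object argument to reshape the cellular morphism into a $\kappa$-good $\kappa$-directed diagram and restrict to an initial segment — a slightly longer path that has the merit of making explicit that the links one lifts against are single pushouts of $\cs$-maps, i.e.\ genuine members of $\Po(\cs)$.
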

\begin{proof}
Consider a morphism $f:A\to B$ in $\cm$ with $A$ and $B$ $\kappa$-presentable. Let  
$$
f: A \xrightarrow{\ f_1\ } A_\kappa\xrightarrow{\ f_2\ } B
$$
be a $(\cof(\cs),\cs^\square)$ factorization of $f$  (see \cite{B} 1.3, the construction is recalled in the proof of \ref{unique} 
in the case of $f:K\to 1$). Since $f_2$ has the right lifting property w.r.t. $f$ there is a diagonal $t:B\to A_\kappa$
in the square
$$
\xymatrix@=3pc{
B \ar[r]^{\id_B} & B \\
A \ar [u]^{f} \ar [r]_{f_1} &
A_\kappa \ar[u]_{f_2}
}
$$
Since $B$ is $\kappa$-presentable, $t$ factorizes through some $A_i$, $i<\kappa$ as 
$$
t: B \xrightarrow{\ t_1\ } A_i\xrightarrow{\ k_{i\kappa}\ } A_\kappa
$$
Since $f_2k_{i\kappa}t_1=f_2t=\id_B$, $t_1$ is a split momorphism and thus $t_1\in\cm$. 

Consider $g:A\to K$ in $\cm$. Since $K$ is $(\cs,\cm,\kappa)$-injective and $\cm$ is closed under directed colimits, by recursion
we get $g_i:A_i\to K$ in $\cm$ such that $g_0=g$ and $g_{j'}k_{jj'}=g_j$ for $j\leq j'\leq i$. Hence $g_it_1f=g$ and $g_it_1\in\cm$.
\end{proof}

\begin{rem}\label{uniqsat}
{
\em
In this case, we do not need the fat small object argument to prove \ref{unique} -- it follows from \cite{R} Theorem 2.
}
\end{rem}

\begin{exam}\label{abelian}
{
\em
In the category $\Ab$ of abelian groups, the assumptions of \ref{satur} are satisfied for the class $\cm$ of monomorphisms 
and $\lambda=\aleph_0$.  
}
\end{exam}

\begin{propo}\label{bool1}
Let $\cm$ be the class of all monomorphisms in $\Bool$ and $\cs$ consist of monomorphisms between countable Boolean algebras. Then $(\cm,\cs)$ is $\aleph_1$-special.
\end{propo}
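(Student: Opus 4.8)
The plan is to obtain this as an instance of Lemma \ref{special2} with $\ck=\Bool$ and $\lambda=\aleph_1$. The key observation is that $\cs$, as described in the statement, is exactly $\cm_{\aleph_1}$: the $\aleph_1$-presentable objects of $\Bool$ are precisely the countable Boolean algebras, so the set of monomorphisms with $\aleph_1$-presentable domain and codomain is the set of monomorphisms between countable Boolean algebras.

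First I would record the hypotheses of Lemma \ref{special2}. By Example \ref{coregular}(2), $\Bool$ is locally finitely presentable, hence locally $\aleph_1$-presentable; it is coregular; and its regular monomorphisms coincide with its monomorphisms and form a special class. So $\cm$ is indeed the (special) class of regular monomorphisms of a locally $\aleph_1$-presentable coregular category. Next I would check the condition that quotients of $\aleph_1$-presentable objects are $\aleph_1$-presentable. Since $\Bool$ is a finitary variety, an algebra is $\aleph_1$-presentable iff it admits a presentation by countably many generators and countably many relations; as the Boolean operations are finitary, the subalgebra generated by a countable subset is countable, so the $\aleph_1$-presentable Boolean algebras are exactly the countable ones. (Alternatively: a countable Boolean algebra is the directed union of its finite, hence finitely presentable, subalgebras over a countable diagram, so it is $\aleph_1$-presentable; conversely an $\aleph_1$-presentable object is a retract of such a colimit and so is countable.) A quotient of a countable Boolean algebra --- the quotient by an ideal --- is again countable, hence $\aleph_1$-presentable.

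With these verifications done, Lemma \ref{special2} gives that $\cm_{\aleph_1}$ is $\aleph_1$-special. Since $\cm_{\aleph_1}=\cs$ and $\cm$ is special, Definition \ref{special1} yields that the pair $(\cm,\cs)$ is $\aleph_1$-special. I do not anticipate a real obstacle: the work is entirely absorbed by Lemma \ref{special2}, and the only points needing care are the identification of the $\aleph_1$-presentable Boolean algebras with the countable ones (so that $\cs=\cm_{\aleph_1}$) and the elementary fact that homomorphic images of countable Boolean algebras are countable.
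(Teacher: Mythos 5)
Your proposal is correct and follows essentially the same route as the paper: identify the countable Boolean algebras with the $\aleph_1$-presentable ones so that $\cs=\cm_{\aleph_1}$, check that $\Bool$ is locally $\aleph_1$-presentable, coregular, with monomorphisms = regular monomorphisms forming a special class and quotients of countables countable, and then invoke Lemma \ref{special2}. Your explicit verification of the quotient hypothesis and of the identification of $\aleph_1$-presentables is a slightly more careful write-up of exactly what the paper does.
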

\begin{proof}
$\cm$ is special following \ref{coregular} (2). Boolean algebras of size $\aleph_0$ coincide with countable Boolean algebras.
Since $\omega_1$-$\Tc\Po_{\omega_1}(\cs)=\cs$ and $\cs$ contains all isomorphisms, \ref{special2} implies that $\cs$ is 
$\omega_1$-special.  
%In a general case, let 
%$$
%A\to C_1\to\dots C_\alpha\to\dots C
%$$
%be a cellular expression of $gf$. Since $B$ is countably generated over $A$ (see \cite{AB}, Proposition 6), $B\subseteq C_\alpha$
%for $\alpha<\omega_1$. Following \cite{AB} Proposition 7(3,4), $A\to C_\alpha$ belongs to $\Po(\cs)$. Hence $B\to C_\alpha$ belongs
%to $\Po(\cs)$ and thus $B\to C$ belongs to $\cell(\cs)$. 
\end{proof}

\begin{coro}\label{bool2}
Let $\kappa$ be an uncountable regular cardinal and $\cs$ consist of monomorphisms between countable Boolean algebras. Then, up to isomorphism, there is a unique $\cs$-cellular $(\cm,\cs,\kappa)$-injective Boolean algebra of size $\kappa$.
\end{coro}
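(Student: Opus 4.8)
The plan is to read this off directly from the two main results of Section 3 together with Proposition~\ref{bool1}, since all that remains is to match hypotheses. By Proposition~\ref{bool1} the pair $(\cm,\cs)$ is $\aleph_1$-special in $\Bool$, which is locally $\aleph_1$-presentable (it is even locally finitely presentable, but I would use $\lambda=\aleph_1$ so that $\ck_\lambda$ is precisely the set of countable Boolean algebras, matching the description of $\cs$; the choice $\lambda=\aleph_0$ would force $\cs\subseteq\cm_{\aleph_0}$, i.e.\ embeddings between finite Boolean algebras, which is not our $\cs$). Since $\kappa$ is uncountable regular we have $\aleph_1\le\kappa$, so Theorem~\ref{unique} applies with $\lambda=\aleph_1$ and yields the uniqueness half: any two $\cs$-cellular $(\cm,\cs,\kappa)$-injective Boolean algebras of size $\kappa$ are isomorphic. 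Here ``of size $\kappa$'' is the expected cardinality condition, since by \ref{coregular}(2) the size of an infinite Boolean algebra is the cardinality of its underlying set.

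For existence I would invoke Proposition~\ref{exist}, again with $\lambda=\aleph_1$, and check its hypotheses in $\Bool$. A non-initial $N$ with $O\to N$ in $\cs$ is easy to supply: the initial object $O$ is the two-element Boolean algebra, so taking $N$ to be the four-element Boolean algebra, the unique homomorphism $O\to N$ is an embedding of countable Boolean algebras and hence lies in $\cs$. The remaining conditions are $|\ck_{\aleph_1}|\le\kappa$ and $\kappa^{<\aleph_1}=\kappa$; since $\ck_{\aleph_1}$ is a representative set of countable Boolean algebras, this reads $2^{\aleph_0}\le\kappa$ and $\kappa^{\aleph_0}=\kappa$, which together amount to the single condition $\kappa^{\aleph_0}=\kappa$. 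This holds in particular when $\kappa$ equals the continuum $c$ and $c$ is regular, which is the situation of \cite{AB}, and more generally for any regular $\kappa$ with $\kappa^{\aleph_0}=\kappa$. Under this side condition Proposition~\ref{exist} produces the desired $\cs$-cellular $(\cm,\cs,\kappa)$-injective Boolean algebra of size $\kappa$.

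The argument is essentially bookkeeping, so I do not expect a genuine obstacle; the only point worth flagging is that the set-theoretic hypothesis $\kappa^{\aleph_0}=\kappa$ enters solely through the existence half (Proposition~\ref{exist}), whereas the uniqueness half needs nothing beyond $\kappa$ being an uncountable regular cardinal. Thus the two halves of the statement are not on equal footing, and the cleanest reading of the corollary fixes $\kappa$ to be, for instance, a regular continuum.
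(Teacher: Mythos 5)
Your proof is correct and is exactly the paper's proof: the printed argument is literally the one-line citation of \ref{unique}, \ref{exist} and \ref{bool1}, with all the hypothesis-checking you carry out (taking $\lambda=\aleph_1$, supplying the non-initial $N$, identifying size with cardinality) left implicit. Your caveat is also well taken: the existence half does go through \ref{exist} and therefore needs $|\ck_{\aleph_1}|\le\kappa$ and $\kappa^{<\aleph_1}=\kappa$, i.e.\ $\kappa^{\aleph_0}=\kappa$, a hypothesis the corollary does not state explicitly (it holds for $\kappa=c$ regular, the case of \cite{AB}), whereas the uniqueness half via \ref{unique} needs only that $\kappa$ be regular and $\ge\aleph_1$.
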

\begin{proof}
It follows from \ref{unique}, \ref{exist} and \ref{bool1}.
\end{proof}

\begin{propo}\label{bool3}
Let $\lambda\leq\kappa$ be regular cardinals such that $\kappa^{<\lambda}=\kappa$. Then, up to isomorphism, there is a unique 
$\cm_\lambda$-cellular $(\cm,\cm_\lambda,\kappa)$-injective Boolean algebra of size $\kappa$.
\end{propo}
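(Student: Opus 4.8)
The plan is to deduce Proposition~\ref{bool3} from the general machinery of Section~3 together with the structural facts about $\Bool$ recorded in \ref{coregular}(2). The key point is to verify that the pair $(\cm,\cm_\lambda)$ is $\lambda$-special, so that Theorem~\ref{unique} and Proposition~\ref{exist} apply. First I would invoke \ref{coregular}(2) to note that $\Bool$ is locally finitely presentable and coregular, that $\cm$ (monomorphisms $=$ regular monomorphisms) is special, and that $\Bool$ is locally $\lambda$-presentable for the given $\lambda$ since it is locally finitely presentable. Next I would check the hypotheses of Lemma~\ref{special2}: $\Bool$ is locally $\lambda$-presentable coregular with $\cm$ the special class of regular monomorphisms, and a quotient of a $\lambda$-presentable Boolean algebra is again $\lambda$-presentable --- this is the statement that a $\lambda$-presentable (for $\lambda$ uncountable, of cardinality $<\lambda$) Boolean algebra has all its quotients of cardinality $<\lambda$, which holds because a quotient of a Boolean algebra is a subquotient of its underlying set. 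Hence Lemma~\ref{special2} gives that $\cm_\lambda$ is $\lambda$-special, and therefore $(\cm,\cm_\lambda)$ is a $\lambda$-special pair.

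Having established that $(\cm,\cm_\lambda)$ is $\lambda$-special, uniqueness is immediate: Theorem~\ref{unique} applies verbatim with $\cs=\cm_\lambda$ and gives that any two $\cm_\lambda$-cellular $(\cm,\cm_\lambda,\kappa)$-injectives of size $\kappa$ are isomorphic. For existence I would appeal to Proposition~\ref{exist}, whose hypotheses are: $\ck=\Bool$ locally $\lambda$-presentable, $(\cm,\cm_\lambda)$ a $\lambda$-special pair, $\lambda\leq\kappa$ regular cardinals with $|\ck_\lambda|\leq\kappa$ and $\kappa^{<\lambda}=\kappa$, plus the existence of a non-initial $N$ with $O\to N$ in $\cs$. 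The cardinal arithmetic assumption $\kappa^{<\lambda}=\kappa$ is given in the statement. The condition $|\Bool_\lambda|\leq\kappa$ says that there are at most $\kappa$ isomorphism types of Boolean algebras of cardinality $<\lambda$; since $\lambda\leq\kappa$ and each such algebra is coded by a subset of $\lambda\times\lambda\times\lambda$, there are at most $2^{<\lambda}\leq\kappa^{<\lambda}=\kappa$ of them, so this holds. Finally, the two-element Boolean algebra $N=\mathbf 2$ is non-initial (the initial object of $\Bool$ is $\mathbf 2$ itself --- so here one must instead take $N$ to be the four-element algebra, and $O\to N$ the unique embedding of the initial Boolean algebra, which lies in $\cm_\lambda$ since both are finite hence $\lambda$-presentable and the map is a monomorphism). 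Thus Proposition~\ref{exist} produces an $\cm_\lambda$-cellular $(\cm,\cm_\lambda,\kappa)$-injective of size $\kappa$, and combined with the uniqueness from Theorem~\ref{unique} this completes the proof.

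The main obstacle, and the only place requiring genuine care, is the verification that quotients of $\lambda$-presentable Boolean algebras are $\lambda$-presentable, which feeds into Lemma~\ref{special2}; everything else is bookkeeping of cardinal arithmetic and of which small objects lie in $\cm_\lambda$. Concretely, I would phrase the proof as follows.

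\begin{proof}
By \ref{coregular}(2), $\Bool$ is locally finitely presentable, hence locally $\lambda$-presentable, and coregular, and its class $\cm$ of regular monomorphisms coincides with the class of all monomorphisms and is special. A Boolean algebra is $\lambda$-presentable iff it has cardinality $<\lambda$ (for $\lambda$ uncountable) resp.\ is finite (for $\lambda=\aleph_0$); in either case a quotient of such an algebra, being a subquotient of its underlying set, is again $\lambda$-presentable. Thus Lemma~\ref{special2} applies and shows that $\cm_\lambda$ is $\lambda$-special, so $(\cm,\cm_\lambda)$ is a $\lambda$-special pair. Since there are at most $2^{<\lambda}\leq\kappa^{<\lambda}=\kappa$ isomorphism types of Boolean algebras of size $<\lambda$, we have $|\Bool_\lambda|\leq\kappa$, and $\kappa^{<\lambda}=\kappa$ by hypothesis. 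Taking $N$ to be the four-element Boolean algebra, the unique morphism $O\to N$ from the initial (two-element) Boolean algebra is a monomorphism between finite, hence $\lambda$-presentable, objects, so it lies in $\cm_\lambda$, and $N$ is not initial. Hence Proposition~\ref{exist} yields an $\cm_\lambda$-cellular $(\cm,\cm_\lambda,\kappa)$-injective Boolean algebra of size $\kappa$, and Theorem~\ref{unique} shows any two such are isomorphic.
\end{proof}
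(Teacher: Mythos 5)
Your proposal is correct and follows the same route as the paper: the paper's proof of \ref{bool3} is simply "like in \ref{bool1}, $(\cm,\cm_\lambda)$ is $\lambda$-special," i.e.\ it verifies the hypotheses of Lemma~\ref{special2} (in particular that quotients of $\lambda$-presentable Boolean algebras are $\lambda$-presentable) and then invokes \ref{unique} and \ref{exist} exactly as you do. Your additional checks --- the count $|\Bool_\lambda|\le 2^{<\lambda}\le\kappa$ and the choice of the four-element algebra as the non-initial $N$ --- are the details the paper leaves implicit, and they are handled correctly.
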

\begin{proof}
Like in \ref{bool1}, $(\cm,\cm_\lambda)$ is $\lambda$-special. 
\end{proof}

\begin{propo}\label{ban}
Let $\cm$ be the class of all isometries in $\Ban$ and $\cs$ consist of isometries between separable Banach spaces.
Then $(\cm,\cs)$ is $\aleph_1$-special.
\end{propo}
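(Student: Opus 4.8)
The plan is to mimic the proof of Proposition~\ref{bool1} for the Boolean case and reduce everything to Lemma~\ref{special2}. First I would record the structural facts about $\Ban$ collected in Examples~\ref{coregular}(2): $\Ban$ is locally $\aleph_1$-presentable, it is coregular, and its class $\cm$ of regular monomorphisms coincides with the class of isometries and is special. So the relevant presentability rank is $\lambda=\aleph_1$, and the hypothesis ``$\cm$ special'' of Lemma~\ref{special2} is already in place.

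Next I would identify $\cs$ with $\cm_{\aleph_1}$. The key point is that a Banach space is $\aleph_1$-presentable precisely when its density character is at most $\aleph_0$, i.e.\ when it is separable; hence a representative set $\ck_{\aleph_1}$ of $\aleph_1$-presentable objects may be taken to consist of separable Banach spaces, and $\cm_{\aleph_1}$ — the isometries whose domain and codomain are $\aleph_1$-presentable — is exactly the class $\cs$ of isometries between separable Banach spaces. This is the analogue of the sentence ``Boolean algebras of size $\aleph_0$ coincide with countable Boolean algebras'' in the proof of Proposition~\ref{bool1}.

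Then I would check the remaining hypothesis of Lemma~\ref{special2}, namely that quotients of $\aleph_1$-presentable objects are $\aleph_1$-presentable: a quotient of a separable Banach space is separable, since the image of a countable dense subset is dense. With this in hand, Lemma~\ref{special2} yields that $\cm_{\aleph_1}=\cs$ is $\aleph_1$-special, and since $\cm$ is special the pair $(\cm,\cs)$ is $\aleph_1$-special, which is the assertion.

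I do not expect a genuine obstacle here. The only points requiring a moment of care are the identification of the $\aleph_1$-presentable Banach spaces with the separable ones (so that $\cm_{\aleph_1}=\cs$) and the stability of separability under quotients, both of which are standard; everything substantive — in particular conditions (S3) and (S4) of Definition~\ref{special1} — has already been packaged into Lemma~\ref{special2}, whose proof uses no feature special to Boolean algebras.
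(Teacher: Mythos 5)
Your proof is correct and takes essentially the same route as the paper's: record from Examples~\ref{coregular} that $\Ban$ is locally $\aleph_1$-presentable and coregular with $\cm$ the (special) class of isometries, identify $\cs$ with $\cm_{\aleph_1}$ via ``$\aleph_1$-presentable $=$ separable,'' and invoke Lemma~\ref{special2}. You even make explicit the hypothesis of that lemma that quotients of $\aleph_1$-presentable objects are $\aleph_1$-presentable, which the paper leaves tacit.
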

\begin{proof}
$\cm$ is special following \ref{coregular} (3). Separable Banach spaces coincide with those having size $\aleph_0$.
Since $\omega_1$-$\Tc\Po_{\omega_1}(\cs)=\cs$ and $\cs$ contains all isomorphisms, \ref{special2} implies that
$\cs$ is $\aleph_1$-special.   
\end{proof}

\begin{coro}\label{ban1}
Let $\kappa$ be an uncountable regular cardinal. Then, up to isomorphism, there is a unique $\cs$-cellular $(\cm,\cs,\kappa)$-injective Banach space of size $\kappa$.
\end{coro}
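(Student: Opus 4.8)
The plan is to deduce Corollary~\ref{ban1} exactly as Corollary~\ref{bool2} was deduced in the Boolean case, namely by verifying the hypotheses of Theorem~\ref{unique} and Proposition~\ref{exist} for the pair $(\cm,\cs)$ in $\Ban$ and then invoking those two results. By Proposition~\ref{ban}, $(\cm,\cs)$ is an $\aleph_1$-special pair, so Theorem~\ref{unique} applies with $\lambda=\aleph_1$ and any regular $\kappa\geq\aleph_1$, giving uniqueness (up to isomorphism) of $\cs$-cellular $(\cm,\cs,\kappa)$-injectives of size $\kappa$. It remains to supply existence, and here the only work is to check the extra cardinal-arithmetic hypotheses of Proposition~\ref{exist}.

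First I would record that $\Ban$ is locally $\aleph_1$-presentable (Examples~\ref{coregular}(2)), so $\lambda=\aleph_1$ and we need $|\ck_\lambda|\leq\kappa$ and $\kappa^{<\lambda}=\kappa$, i.e.\ $\kappa^{\aleph_0}=\kappa$. The set $\ck_{\aleph_1}$ of (representatives of) separable Banach spaces has cardinality continuum, so $|\ck_{\aleph_1}|=2^{\aleph_0}\leq 2^\kappa$; but to apply Proposition~\ref{exist} verbatim we want $2^{\aleph_0}\leq\kappa$, which for uncountable regular $\kappa$ with $\kappa^{\aleph_0}=\kappa$ is automatic since then $2^{\aleph_0}\leq\kappa^{\aleph_0}=\kappa$. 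The condition $\kappa^{\aleph_0}=\kappa$ is exactly $\kappa^{<\aleph_1}=\kappa$. So the cleanest honest statement requires this hypothesis on $\kappa$; I would either add it to the corollary or note that for $\kappa$ of uncountable cofinality (in particular regular) with $\kappa^{\aleph_0}=\kappa$ one may proceed. Finally, $O=0$ (the zero Banach space) is initial, and for any nonzero separable $N$, say $N=\reals$, the map $O\to N$ is an isometry, hence lies in $\cs$; this supplies the required non-initial $N$ with $O\to N$ in $\cs$.

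With these checks in hand the proof is a one-line citation: existence of an $\cs$-cellular $(\cm,\cs,\kappa)$-injective Banach space of size $\kappa$ follows from Proposition~\ref{exist} and Proposition~\ref{ban}, and uniqueness from Theorem~\ref{unique} and Proposition~\ref{ban}; together they give the stated unique such Banach space of size $\kappa$.

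I do not expect a genuine obstacle here — the result is a direct corollary — the only subtlety is bookkeeping the cardinal arithmetic: Proposition~\ref{exist} as stated needs $\kappa^{<\lambda}=\kappa$, which with $\lambda=\aleph_1$ is $\kappa^{\aleph_0}=\kappa$, a nontrivial restriction on $\kappa$ (it fails, e.g., at $\kappa=\aleph_\omega$ or any $\kappa$ of countable cofinality, and can fail at regular $\kappa$ below $2^{\aleph_0}$). So the honest form of the corollary should carry the hypothesis $\kappa^{\aleph_0}=\kappa$ (equivalently $\kappa^{<\aleph_1}=\kappa$), paralleling Proposition~\ref{bool3}; under that hypothesis the argument is immediate.

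\begin{proof}
By Proposition~\ref{ban}, $(\cm,\cs)$ is an $\aleph_1$-special pair in the locally $\aleph_1$-presentable category $\Ban$ (Examples~\ref{coregular}(2)). Since $\kappa$ is regular, uncountable and $\kappa^{\aleph_0}=\kappa$, we have $\aleph_1\leq\kappa$, $\kappa^{<\aleph_1}=\kappa$ and $|\ck_{\aleph_1}|=2^{\aleph_0}\leq\kappa^{\aleph_0}=\kappa$. Taking $N=\reals$, the morphism $O\to N$ is an isometry between separable Banach spaces, hence lies in $\cs$, and $N$ is not initial. Thus the hypotheses of Proposition~\ref{exist} are satisfied with $\lambda=\aleph_1$, so there exists an $\cs$-cellular $(\cm,\cs,\kappa)$-injective Banach space of size $\kappa$. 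Uniqueness up to isomorphism follows from Theorem~\ref{unique}.
\end{proof}
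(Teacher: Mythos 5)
Your proof is correct and follows exactly the paper's route: the paper's own proof of \ref{ban1} is the one-line citation ``follows from \ref{unique}, \ref{exist} and \ref{ban}.'' Your additional bookkeeping is not wasted, though: you are right that Proposition~\ref{exist} requires $\kappa^{<\lambda}=\kappa$ with $\lambda=\aleph_1$, i.e.\ $\kappa^{\aleph_0}=\kappa$, together with $|\ck_{\aleph_1}|\leq\kappa$ (which then follows since there are $2^{\aleph_0}\leq\kappa^{\aleph_0}=\kappa$ separable Banach spaces up to isomorphism), and neither is implied by ``uncountable regular'' alone --- e.g.\ $\kappa=\aleph_1<2^{\aleph_0}$ is regular and uncountable but fails $\kappa^{\aleph_0}=\kappa$. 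So the existence half of the corollary as stated really does need the extra hypothesis $\kappa^{\aleph_0}=\kappa$ (it holds automatically in the motivating case $\kappa=c$ regular, which is the case treated in \cite{AB}), while the uniqueness half via Theorem~\ref{unique} holds unconditionally. Your explicit choice of $N=\reals$ to witness the non-initial object with $O\to N$ in $\cs$ is also a detail the paper leaves implicit.
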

\begin{proof}
It follows from \ref{unique}, \ref{exist} and \ref{ban}.
\end{proof}

\begin{propo}\label{ban2}
Let $\lambda\leq\kappa$ be regular cardinals such that $\kappa^{<\lambda}=\kappa$. Then, up to isomorphism, there is a unique 
$\cm_\lambda$-cellular $(\cm,\cm_\lambda,\kappa)$-injective Banach space of density character $\kappa$.
\end{propo}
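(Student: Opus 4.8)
The plan is to repeat the argument of \ref{ban} and \ref{ban1}, but with the set $\cs$ of isometries between separable Banach spaces replaced by $\cm_\lambda$, the set of isometries between Banach spaces of density character $<\lambda$, and then to combine \ref{unique} with \ref{exist}. Throughout, $\ck=\Ban$ and $\cm$ is the class of isometries; by \ref{coregular}, $\Ban$ is coregular, $\cm$ is the class of regular monomorphisms and is special and closed under directed colimits, and $\Ban$ is locally $\aleph_1$-presentable, hence locally $\lambda$-presentable for the cardinal $\lambda$ at hand.

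First I would verify that $(\cm,\cm_\lambda)$ is a $\lambda$-special pair by applying \ref{special2}. Its hypotheses ask that $\ck$ be locally $\lambda$-presentable and coregular, that $\cm$ be special, and that quotients of $\lambda$-presentable objects be $\lambda$-presentable; only the last needs a word. A Banach space is $\lambda$-presentable exactly when its density character is $<\lambda$ — this follows from the identification of size with density character recalled in \ref{coregular} together with the rank description of size — and an $\ce$-quotient, being a dense-range continuous image, cannot increase the density character, so it is again $\lambda$-presentable. Hence \ref{special2} applies and $(\cm,\cm_\lambda)$ is $\lambda$-special; in particular clauses (S1)--(S3) are immediate and the cancellation clause (S4), which is the only substantive point, is supplied by \ref{special2}.

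Uniqueness of an $\cm_\lambda$-cellular $(\cm,\cm_\lambda,\kappa)$-injective of size $\kappa$, i.e.\ of density character $\kappa$, then follows from \ref{unique}. For existence I would apply \ref{exist}: the hypothesis $\kappa^{<\lambda}=\kappa$ is assumed; the bound $|\ck_\lambda|\leq\kappa$ holds because a Banach space of density character $<\lambda$ is determined up to isometry by a norm on a $(<\lambda)$-sized rational vector space, whence $|\ck_\lambda|\leq 2^{<\lambda}\leq\kappa^{<\lambda}=\kappa$; and a non-initial object $N$ with $O\to N$ in $\cm_\lambda$ is given by the scalar field $N=\reals$, which is one-dimensional, hence separable and $\lambda$-presentable, with $0\to\reals$ an isometry. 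Thus \ref{exist} produces an $\cm_\lambda$-cellular $(\cm,\cm_\lambda,\kappa)$-injective of density character $\kappa$. There is no genuine obstacle here: the proof is a transcription of the Banach-space case \ref{ban1} and \ref{bool3}, the single nontrivial ingredient (S4) being delivered wholesale by \ref{special2}; the only steps requiring a sentence are the identification of $\lambda$-presentable Banach spaces with those of density character $<\lambda$ and the counting bound $|\ck_\lambda|\leq\kappa$.
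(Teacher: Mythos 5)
Your proposal is correct and follows essentially the same route as the paper: verify via Lemma~\ref{special2} that $(\cm,\cm_\lambda)$ is $\lambda$-special (the paper's only added remark being that the pushout $P$ appearing there has small density character, which your appeal to the $\lambda$-presentability of pushouts of $\lambda$-presentable objects already covers), then invoke \ref{unique} for uniqueness and \ref{exist} for existence. Your extra verifications (quotients preserve density character, $|\ck_\lambda|\leq 2^{<\lambda}\leq\kappa$, and $N=\reals$) are exactly the details the paper leaves implicit; note only that both you and the paper tacitly assume $\lambda\geq\aleph_1$ so that $\Ban$ is locally $\lambda$-presentable.
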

\begin{proof}
Like in \ref{ban}, $(\cm,\cm_\lambda)$ is $\lambda$-special. Observe that $P$ in \ref{special2} has density character $<\kappa$
because $P\subseteq Y$ and $Y$ has density character $<\kappa$.
\end{proof}

\begin{rem}\label{aviles}
{
\em
(1) A Boolean algebra $B$ has size $\kappa$ if and only if $|B|=\kappa$. Thus \ref{bool2} implies \cite{AB} Theorem 4 and \ref{bool3}
implies the result mentioned in \cite{AB}, Section 6. We only have to show that any $f\in\Po(\cs)$ is a pushout 
$$
\xymatrix@=3pc{
K \ar[r]^{f} & L \\
X \ar [u]^{u} \ar [r]_{s} &
Y \ar[u]_{}
}
$$
where $s\in\cs$ and $u\in\cm$. For this, it suffices to express $f$ as a pushout
$$
\xymatrix@=3pc{
K \ar[r]^{f} & L \\
X' \ar [u]^{u'} \ar [r]_{s'} &
Y' \ar[u]_{}
}
$$
with $s'\in\cs$, take the factorization $u'=uu''$ where $u''$ is an epimorphism and $u$ a regular monomorphism and take a pushout
$$
\xymatrix@=3pc{
X \ar[r]^{s} & Y \\
X' \ar [u]^{u''} \ar [r]_{s'} &
Y' \ar[u]_{}
}
$$
Clearly, $s\in\cs$.

(2) A Banach space has size $\kappa$ if and only if its density character is $\kappa$. Thus \ref{ban1} is \cite{AB} Theorem 16
and \ref{ban2} is \cite{AB} Theorem 35. We again use the (epimorphism, regular monomorphism) factorization of $u$.
}
\end{rem}

\begin{exams}\label{cstar}
{
\em
(1) Let $\CAlg$ be the category of commutative unital $C^\ast$-algebras algebras and $\cm$ the class of monomorphisms. 
Since $\CAlg$ is a variety of algebras with $\aleph_0$-ary operations (see \cite{I}), $\CAlg$ is locally $\aleph_1$-presentable 
(see \cite{AR} 3.28). $\aleph_1$-presentable objects coincide with separable $C^\ast$-algebras. Since $\CAlg^{\op}$ is the category 
of compact Hausdorff spaces, monomorphisms in $\CAlg$ coincide with regular monomorphisms and $\CAlg$ is coregular. Thus $\cm$ is special. Let $\cs$ consist of monomorphisms between separable $C^\ast$-algebras. Note that separable commutative unital 
$C^\ast$-algebras are precisely $C(X)$ where $X$ is a compact metrizable topological space (see \cite{Ch}). Following \ref{special2}, 
$\cs$ is $\aleph_1$-special. A $C^\ast$-algebra $K$ has size continuum if and only if $|K|=c$. Following \ref{unique} and \ref{exist}, there is a unique $\cs$-cellular $(\cm,\cs,c)$-injective $C^\ast$-algebra. Assuming GCH, this is $C(\beta\Bbb N\setminus\Bbb N)$ (see \cite{K} 7.1). Observe that $\beta(\Bbb N)\setminus\Bbb N$ corresponds to the Boolean algebra
$\cp(\omega)/fin$ in the Stone duality.

Again, we could do it for any regular cardinal $\kappa$ such that $\kappa^{\aleph_0}=\kappa$. In this case, the size equals
to the cardinality of the underlying set. Note that $\CAlg$ has enough $\cm$-injectives and that $\cm$-injective commutative
unital $C^\ast$-algebras are precisely $C(X)$ where $X$ is extremally disconnected compact Hausdorff space (see \cite{Gl}).
In the Stone duality, these spaces correspond to complete Boolean algebras (see \cite{Gl}). Hence the category of $\cm$-injective
commutative $C^\ast$-algebras is isomorphic the the category of complete Boolean algebras. Since this category is not accessible,
$\cm$ is not cofibrantly generated (cf. \cite{AHRT}).

(2) Let $\Heyt$ be the category of Heyting algebras and Heyting algebra homomorphisms. Let $\cm$ the class of monomorphisms. Since epimorphisms are surjective in $\Heyt$ (see \cite{KMPT}), monomorphisms coincide with regular monomorphisms. Thus $\Heyt$ is coregular (see \cite{Pi}). $\Heyt$ is locally finitely presentable and $\cm$ is special. If $K$ is not finitely presentable then the size of $K$ equals
to $|K|$. Let $\cs$ consist of monomorphisms between countable Heyting algebras. Following \ref{special2}, $\cs$ is $\aleph_1$-special. Thus, following \ref{unique} and \ref{exist}, there is a unique $\cs$-cellular $(\cm,\cs,\kappa)$-injective Heyting algebra
for any uncountable regular cardinal $\kappa$.

Note that, since $\cm$-injective Heyting algebras coincide with complete Boolean algebras (\cite{Ba}), $\Heyt$ does not have enough
$\cm$-injectives. Thus $(\cm,\cm^\square)$ is not a weak factorization system. Hence $\cm$ is not cofibrantly generated. 

(3) The same situation is in the category $\Gr$ of groups. Monomorphisms coincide with regular monomorphisms, $\Gr$ is coregular
and $\Gr$ does not have enough injectives (see \cite{KMPT}).

(4) The category $\Pos$ of posets (and isotone mappings) is locally finitely presentable, coregular and regular monomorphisms coincide
with embeddings. Since $\cm$-injectives are complete lattices, the class $\cm$ of regular monomorphisms is not cofibrantly generated
(see \cite{AHRT}). Thus the situation is the same as in $\Bool$. 
}
\end{exams} 

%\begin{rem}\label{re4.7}
%{
%\em 
%\ref{le4.6} applies to any adhesive category and thus to any topos (see \cite{L}). It also applies to any semi-abelian category
%in the sense of Raikov (see \cite{Ru} Proposition 1). Thus it applies to any quasi-abelian category, in particular to any abelian %category and to the category of Banach spaces.
%}
%\end{rem}

\end{document}